\newtheorem{theorem}{Theorem}[section]
\newtheorem{remark}[theorem]{Remark}
\newtheorem{lemma}[theorem]{Lemma}
\numberwithin{equation}{section}
 \newcommand{\al}{\alpha}
 \newcommand{\be}{\beta}
 \newcommand{\Ld}{\Lambda}
 \newcommand{\ep}{\varepsilon}
 \newcommand{\Si}{\Sigma}
 \newcommand{\si}{\sigma}
 \newcommand{\om}{\omega}
 \newcommand{\Om}{\Omega}
 \newcommand{\ga}{\gamma}
 \newcommand{\Ga}{\Gamma}
 \newcommand{\sch}{Schr\"odinger }
 \newcommand{\A}{\textbf{A}}
 \newcommand{\Real}{\mathbb{R}}
 \newcommand{\abs}[1]{\vert#1\vert}
 \newcommand{\norm}[1]{\Vert#1\Vert}
 \newcommand{\Norm}[1]{\left\Vert#1\right\Vert}
 \def\<{\left\langle} \def\>{\right\rangle}
 \def\({\left(} \def\){\right)}
 \newcommand{\n}{\nabla}
 \newcommand{\p}{\partial}
 \renewcommand{\H}{\textbf{H}}
\begin{document}

\title{Skew Mean Curvature Flow }

\author{Chong Song, Jun Sun\\
\\
\\
\emph{In memory of Professor Weiyue Ding}
}

\address{School of Mathematical Sciences, Xiamen University, Xiamen, 361005, P.R.China.}
\email{songchong@xmu.edu.cn}

\address{School of Mathematics and Statistics, Wuhan University, Wuhan, 430072, P.R.China.}
\email{sunjun@whu.edu.cn}

\thanks {The first author is supported by Fundamental Research Funds for the Central University (No. 20720170009); the second author is supported by NSFC (No. 11401440).}

\date{\today}

\begin{abstract}
The skew mean curvature flow(SMCF), which origins from the study of fluid dynamics, describes the evolution of a codimension two submanifold along its binormal direction. We study the basic properties of the SMCF and prove the existence of a short-time solution to the initial value problem of the SMCF of compact surfaces in Euclidean space $\Real^4$. A Sobolev-type embedding theorem for the second fundamental forms of two dimensional surfaces is also proved, which might be of independent interest.
\end{abstract}

\maketitle

\section{Introduction}

The \emph{skew mean curvature flow}(SMCF) is a geometric flow which evolves a codimension two submanifold along its binormal direction with a speed given by its mean curvature. Specifically, suppose $\Si$ is an $n$ dimensional oriented manifold and $(\overline{M},\bar{g})$ is an $(n+2)$ dimensional oriented Riemannian manifold. Let $I=[0,T)$ be an interval and $F:I\times \Si\to \overline{M}$ be a family of immersions. For each $t\in I$, denote the submanifold by $\Si_t=F(t, \Si)$ and its mean curvature by $\textbf{H}(F)$. The normal bundle $N\Si_t$ of the submanifold is a rank two vector bundle with a naturally induced complex structure $J(F)$ which simply rotates a vector in the normal space by $\pi/2$ positively. More precisely, for any point $y=F(t,x)\in \Si_t$ and normal vector $\nu\in N_y\Si_t$, we require $J(F)\nu\perp\nu$ and $\bar{\om}(F_*(e_1), \cdots, F_*(e_n), \nu, J(F)\nu)>0$, where $\bar{\om}$ is the volume form of $\overline{M}$ and $e_1, \cdots, e_n$ is an oriented basis of $\Si$. We shall call the binormal vector $J(F)\textbf{H}(F)$ the \emph{skew mean curvature vector} and the SMCF is defined by
\begin{equation}\label{e:SMCF}
         \frac{\partial F}{\partial t}=J(F)\textbf{H}(F).
\end{equation}

In particular, the one dimensional SMCF in the Euclidean space $\Real^3$ is just the well-known \emph{vortex filament equation}(VFE)
\begin{equation}\label{e:darios}
  \p_t \gamma = \p_s\gamma\times \p_s^2\gamma,
\end{equation}
where $\ga$ is a time dependent space curve, $s$ is its arc-length parameter and $\times$ denotes the cross product in $\Real^3$. To see this, let $\{\mathbf{t}, \mathbf{n}, \mathbf{b}\}$ be the Frenet frame and $k$ be the curvature of the curve $\gamma(t, \cdot)$. Then the mean curvature vector is $\textbf{H}(\ga)=\p_s^2\gamma=k\mathbf{n}$ and the complex structure $J(\ga)=\p_s\gamma\times$ rotates $\mathbf{n}$ to the binormal vector $\mathbf{b}$. Thus equation (\ref{e:darios}) is equivalent to
\begin{equation*}
  \p_t \gamma = J(\ga)\textbf{H}(\ga)=k\mathbf{b}.
\end{equation*}
The VFE was first discovered by Da Rios~\cite{Da} in 1906 in the study of the free motion of a vortex filament. A key feature of the VFE is that, by a so-called Hasimoto transformation~\cite{Hasimoto}, it is equivalent to a complex-valued cubic \sch equation which is completely integrable. Thus it admits soliton solutions and has very rich structure.

General SMCF naturally arises in higher dimensional hydrodynamics. A singular vortex in a fluid is called a \emph{vortex membrane} if it is supported on a codimension two subset. The law of locally induced motion of a vortex membrane can by deduced from the Euler equation by applying the Biot-Savart formula. In 2012, Shashikanth~\cite{Sh} first investigated the motion of a vortex membrane in $\Real^4$ and showed that it is governed by the two dimensional SMCF. Khesin~\cite{Kh} generalized this conclusion to any dimensional vortex membranes in Euclidean spaces and gave the formal definition of the SMCF which we apply here.

The SMCF also emerges in the study of asymptotic dynamics of vortices in the context of superfluidity and superconductivity. Here the model is usually given by a PDE of a complex field and the vortices are just zero sets of the solutions. For example, for the Ginzburg-Landau heat flow, it was shown that asymptotically the energy concentrates on the codimension two vortices which moves along the mean curvature flow~\cite{Lin98,JS99,BOS,JL}. Similar phenomena are observed for wave and \sch type PDEs. In particular, for the Gross-Pitaevskii equation which models the wave function associated with a Bose-Einstein condensate, physics evidences indicate that the vortices would evolve along the SMCF. This was first verified by Lin~\cite{LinT} for the vortex filaments in three space dimensions. For higher dimensions, Jerrard~\cite{Jerrard} proved this conjecture when the initial singular set is a codimension two sphere with multiplicity one in 2002. It is worth mentioning that he also proposed a notion of weak solution to the SMCF.

Besides its physical significance, the SMCF is a rather canonical geometric flow for codimension two submanifolds which can be viewed as the Schr\"odinger-type counterpart of the well-known mean curvature flow(MCF). In fact, the SMCF has a notable Hamiltonian structure and is volume preserving. The infinite dimensional space of codimension two immersions of a Riemannian manifold admits a generalized Marsden-Weinstein symplectic structure~\cite{MW}, and the SMCF turns out to be the Hamiltonian flow of the volume functional on this space. This fact was first noted by Haller and Vizman~\cite{HV} where they studied the non-linear Grassmannians. For completeness, we include a detailed explanation in Section~\ref{s:sym} below.

The SMCF is also related to another important Hamiltonian flow, namely, the \emph{\sch flow}~\cite{DW1,TU,Ding}. The \sch flow stems from the study of ferromagnetism and is the Hamiltonian flow of the energy functional defined on the space of maps from a Riemannian manifold to a symplectic manifold. It is well-known that, if a curve satisfies the VFE~(\ref{e:darios}), then its Gauss map satisfies the one dimensional \sch flow on the standard sphere. In fact, this relation also holds true for higher dimensions. In~\cite{S}, it is shown that the Gauss map of an $n$ dimensional SMCF in $\Real^{n+2}$, which maps from the submanifold to the Grassmannian $G(n, n+2)$ (which is K\"ahler), satisfies a \sch flow equation. However, different from the one dimensional case, the metric of the underlying manifold of the \sch flow is evolving along the SMCF for $n\ge 2$.

Despite of these various sources of interest, little is known about the SMCF except the one dimensional case, i.e. classical VFE. There is a vast body of literatures on the VFE and its dynamics are well-understood. For example, the global well-posedness of the VFE is obtained in~\cite{NT} by method of regularization and recently in \cite{JS15} in a different framework. As a natural generalization of the VFE in both higher dimensions and Riemannian geometry, the SMCF has drawn more and more attention in recent years. Gomez proved the global existence of one dimensional SMCF in a general three dimensional Riemannian manifold in his thesis~\cite{Go}, which also contains some partial results on the Hasimoto transformation of two dimensional SMCF. As far as we know, Lin and his collaborators has an undergoing project on the energy conserved motion~\cite{Lin09,LW10}, which includes the SMCF of surfaces in $\Real^4$. The SMCF is also independently proposed under the name of star mean curvature flow by Terng~\cite{Terng}. However, basic issues like local well-posedness of the general SMCF is still open.

The current paper takes a first step towards the research of higher dimensional SMCF. We explore the basic properties of general SMCF and show the local existence of two dimensional SMCF of compact surfaces in the Euclidean space $\Real^4$. The uniqueness of SMCF will be addressed in another sequel.

Now let's state our main results and explain the difficulties in the proof. Suppose $\Si$ is a two dimensional oriented compact surface and $F_0$ is a smooth immersion from $\Si$ to $\Real^4$. We consider the initial value problem
\begin{equation}\label{e:SMCF1}
\left\{\begin{aligned}
         &\frac{\partial F}{\partial t}=J(F)\textbf{H}(F), \\
         &F(0,\cdot)=F_0.
\end{aligned}\right.
\end{equation}
Let $\textbf{A}_0$ denote the second fundamental form of the immersed surface $F_0(\Si)$, we can define a Sobolev-type norm $\norm{\textbf{A}_0}_{H^{2,2}}$ by the induced metric and normal connection(see~(\ref{e:sobolev-norm}) for details). Our main result is

\begin{theorem}\label{t:main}
Suppose $\Si$ is a two dimensional oriented compact surface. For any smooth immersion $F_0:\Sigma\to\mathbb{R}^4$, the SMCF (\ref{e:SMCF1}) admits a smooth local solution $F\in
C^\infty([0, T)\times\Si)$, where the time $T$ only depends on $\norm{\textbf{A}_0}_{H^{2,2}}$ and the volume of $F_0(\Si)$.
\end{theorem}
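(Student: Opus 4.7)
The plan is to reformulate \eqref{e:SMCF1} as a quasilinear Schr\"odinger-type system for the second fundamental form $\vec{A}$ via the generalized Hasimoto transformation in the temporal gauge developed earlier in the paper. Writing $\vec{A}$ in a time-parallel orthonormal frame of the normal bundle produces a complex-valued field $\Phi = \{\phi_{ij}\}_{1\le i,j\le 2}$ which, under the SMCF, satisfies
\begin{equation*}
-\vec{i}\,\p_t \Phi = \De_g \Phi + \Phi*\Phi*\Phi,
\end{equation*}
where the induced metric $g$ (and hence $\De_g$ and the $*$-products) is determined by the immersion $F$, which in turn is reconstructed from $\Phi$ together with $g$ through the Gauss--Weingarten formulas. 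The Gauss--Codazzi--Ricci equations guarantee that a solution $\Phi$ of this Schr\"odinger system on a short time interval can be integrated back to an immersion $F$ solving \eqref{e:SMCF1}, so the problem reduces to short-time existence for this system with initial data of $H^{2,2}$ regularity.

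To construct $\Phi$, I would use a parabolic regularization: for $\ep > 0$ replace $-\vec{i}\,\p_t$ by $(\ep - \vec{i})\,\p_t$, obtaining a genuinely parabolic quasilinear system for $\Phi^\ep$ whose metric $g^\ep$ is updated simultaneously from the reconstructed immersion $F^\ep$. For each fixed $\ep$, a standard contraction-mapping argument in an appropriate Sobolev space produces a unique smooth short-time solution. The substantive task is to derive energy estimates on $\|\Phi^\ep(t,\cdot)\|_{H^{2,2}}$ that are uniform in $\ep$ and valid on a time interval $[0,T]$ depending only on $\|\vec{A}_0\|_{H^{2,2}}$ and the volume of $F_0(\Si)$. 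With such bounds in hand, a standard compactness argument, combined with weak-strong uniqueness for the limiting Schr\"odinger system, yields a solution $\Phi$ on $[0,T]$, and then the immersion $F$ is recovered by integrating the Gauss--Weingarten system.

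The main obstacle, and the reason for the dimension restriction $n=2$, lies in closing the $H^{2,2}$ energy estimate. Since $g$ and the $*$-structure are themselves functions of $\Phi$, every estimate must control the coefficients of $\De_g$ and of the nonlinearity in terms of $\Phi$ alone, without any a priori pointwise information about $g$. Classical Sobolev embeddings are unavailable because their constants depend on the unknown geometry. This is exactly where the uniform Sobolev-type embedding for the second fundamental form of two-dimensional surfaces, stated as the paper's main technical tool, intervenes: it bounds $L^\infty$ and other low-order norms of the relevant geometric quantities purely in terms of $\|\vec{A}\|_{H^{2,2}}$ and the area, with a universal constant. Applying $\De_g$ twice to the regularized equation, pairing with $\De_g \Phi^\ep$, and estimating commutators $[\p_t,\De_g]$ and cubic terms via this uniform embedding, one should obtain a differential inequality of the Gronwall form
\begin{equation*}
\dif{t}\Norm{\Phi^\ep(t)}_{H^{2,2}}^2 \le C\bigl(1+\Norm{\Phi^\ep(t)}_{H^{2,2}}^{2k}\bigr),
\end{equation*}
with $C$ depending only on $\|\vec{A}_0\|_{H^{2,2}}$ and $\mathrm{Vol}(F_0(\Si))$. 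This produces the uniform lifespan $T$.

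Once the $\ep\to 0$ limit $\Phi \in C([0,T];H^{2,2})$ is in hand, smoothness on $(0,T]\times\Si$ is obtained by differentiating the Schr\"odinger system: each $\n^k\Phi$ satisfies a linear Schr\"odinger equation with coefficients already controlled by the previous step, and the energy method (again leveraging the uniform Sobolev embedding to handle lower-order terms) gives $H^{k,2}$ bounds for all $k$. The reconstruction of $F$ from $\Phi$ via the integrable Gauss--Weingarten system then produces a smooth solution to \eqref{e:SMCF1} on $[0,T]\times\Si$, completing the argument. I expect the delicate point throughout to be the interplay between the first-order derivatives hidden in $\De_g\Phi$ (which, in contrast to the $n=1$ case \eqref{e:ha1}, cannot be removed) and the borderline nature of the $H^2\hookrightarrow C^0$ embedding in dimension two, both of which force the proof to go through the uniform Sobolev embedding rather than a more elementary estimate.
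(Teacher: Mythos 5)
Your proposal correctly identifies the two load-bearing ideas of the paper's proof --- a parabolic $\ep$-regularization followed by $\ep$-uniform energy estimates closed via the uniform Sobolev-type embedding $\norm{\vec{A}}_{H^{2,2}}\le B,\ |\Si|\ge m \Rightarrow \norm{\vec{A}}_{C^0}\le C(B,m)$, with a continuity/Gronwall argument giving a lifespan depending only on $\norm{\vec{A}_0}_{H^{2,2}}$ and $|\Si_0|$. However, the framework in which you propose to run these estimates has a genuine gap at its very first step. You reformulate the SMCF as a global Schr\"odinger system for $\Phi=\{\phi_{ij}\}$ via the generalized Hasimoto transformation in the temporal gauge; but that transformation requires a global orthonormal frame $\{\nu_{3},\nu_{4}\}$ of the normal bundle, i.e.\ that $N\Si$ be trivial. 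The paper states this explicitly as an \emph{additional assumption} in its Hasimoto section and emphasizes that for $n\ge 2$ one cannot expect such a global frame. For a general immersion of a compact oriented surface into $\Real^4$ the normal Euler number need not vanish, so the reduction to a single global system for $\Phi$ is simply unavailable in the generality of the theorem. Moreover, even when the frame exists, your "integrate back via Gauss--Codazzi--Ricci" step is a substantial unproved claim: one must show that the constraint equations are propagated by the coupled $(\Phi,g)$ evolution and that the metric used in $\De_g$ is consistently the one induced by the reconstructed immersion; none of this is standard for a Schr\"odinger-type (non-parabolic limit) system, and the short-time solvability of the regularized coupled system is not a routine contraction argument because the coefficients depend nonlocally on $\Phi$ through the reconstruction.

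The paper sidesteps all of this by perturbing at the level of the immersion itself: it solves $\p_t F=J\H+\ep\H$ (which is degenerate parabolic; DeTurck's trick plus standard parabolic theory give a smooth solution $F_\ep$ on $[0,T_\ep)$), and then derives the evolution equations for $g$, $\vec{A}$ and $\n^l\vec{A}$ \emph{along that flow}, obtaining $\frac{d}{dt}\int|\n^l\vec{A}|^2\,d\mu\le c\,\max|\vec{A}|^2\int|\n^l\vec{A}|^2\,d\mu$ after integrating by parts and using $\n J=0$ to kill the skew term. The uniform $C^0$ bound from the embedding theorem (valid on the interval where $\norm{\vec{A}_\ep}_{H^{2,2}}\le 2\norm{\vec{A}_0}_{H^{2,2}}$ and the volume has not dropped below $|\Si_0|/2$) then closes the Gronwall loop, gives a uniform lifespan $T_0$, and all higher estimates plus Arzel\`a--Ascoli produce the $\ep\to 0$ limit directly as an immersion solving the SMCF --- no gauge choice, no reconstruction, no constraint propagation. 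If you want to salvage your route you would at minimum have to work with local frames and a genuinely geometric (frame-independent) formulation of the energy estimates, at which point you are essentially back to the paper's argument on $\n^l\vec{A}$.
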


\begin{remark}
The existence result actually holds for less smooth initial data, e.g. for $W_{loc}^{4,2}$-immersions with $H^{2,2}$-bounded second fundamental forms. Recall that the optimal well-posedness result of the \sch flow from 2 dimensional manifolds is established in the $W^{3,2}$ Sobolev space~\cite{DW2,Mc}. Since the Gauss map of a solution to the SMCF satisfies a \sch flow with varying metric~\cite{S}, the requirement of $W^{4,2}$-regularity of the initial data for the SMCF is consistent.
\end{remark}

\begin{remark}
With our method, it is not hard to show that same results hold for a general 4 dimensional ambient Riemannian manifold with bounded geometry. However, Theorem~\ref{t:main1} below and hence same strategy fails for SMCF of submanifolds of dimension larger than two.
\end{remark}

The above result might seem standard when compared to the well-posedness of MCF. However, due to the skew-symmetric operator $J$, the SMCF is a (degenerate) \sch type system which has a totally different character from parabolic equations from the perspective of PDEs. Generally, there is no standard theory of existence of solutions and the DeTurck trick does not apply. One approach to obtain existence results of \sch type systems is to use a parabolic approximation, which proved to be successful in the study of the \sch flow~\cite{DW2} and other kind of Hamiltonian flows~\cite{SY}. Here we adopt the same strategy.

More precisely, we consider the perturbed system for a small real number $\varepsilon>0$
 \begin{equation}\label{e:pSMCF}
\left\{\begin{aligned}
         &\frac{\partial F}{\partial t}=J\textbf{H}+\varepsilon\textbf{H}, \\
         &F(0, \cdot)=F_0.
\end{aligned}\right.
\end{equation}
The system~(\ref{e:pSMCF}) is weakly parabolic and behaves similar as the MCF. By applying the DeTurck trick and standard parabolic theories, it is easy to find that (\ref{e:pSMCF}) admits a local solution $F_\ep$ on some time interval $[0,T_\ep)$ for every $\ep>0$. Next we need to show that $F_\ep$ converges to a solution of the original SMCF (\ref{e:SMCF1}) as $\ep\to 0$. Thus the problem is reduced to deriving uniform estimates of $F_\ep$ as well as a lower bound for the lifespan $T_\ep$.

To obtain uniform estimates for the perturbed flow, a key ingredient is a Gagliardo-Nirenberg interpolation inequalities on vector bundles(cf. \cite{DW2}). However, along the SMCF, since the induced metric of the underlying manifold is varying , the Sobolev constants can not by chosen uniformly in general. Our solution is to adopt the interpolation inequality of tensors which is independent of the metric and proven by Hamilton~\cite{Ha} in the study of Ricci flow. It will be used to derive a Gronwall type inequality for the Sobolev norms of the second fundamental forms. But this method relies on a uniform a priori $C_0$-estimate of the second fundamental form. The following uniform $C_0$-estimate of the second fundamental forms of surfaces plays a crucial role in our proof, which might be of independent interest.

\begin{theorem}\label{t:main1}
Given positive numbers $B$ and $m$, there exists a constant $C(B,m)$, depending only on $B$ and $m$, such that for any immersed compact surface $\Si^2\subset\Real^4$ satisfying
\begin{equation*}
\norm{\textbf{A}}_{H^{2,2}}\le B \text{~~and~~} |\Si|\ge m,
\end{equation*}
there holds
\begin{equation*}
\norm{\textbf{A}}_{C^0}\le C(B,m).
\end{equation*}
\end{theorem}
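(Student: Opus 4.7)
The plan is to combine the extrinsic Michael-Simon Sobolev inequality with an iteration scheme to transfer the $L^2$-type information contained in $\|\vec A\|_{H^{2,2}}\le B$ into an $L^\infty$ bound. The starting point is the Michael-Simon inequality for an immersed surface $\Sigma^2\subset\mathbb{R}^4$ and any Lipschitz function $f$,
$$\Bigl(\int_\Sigma f^2\,d\mu\Bigr)^{1/2}\le C_0\int_\Sigma\bigl(|\nabla f|+|f|\,|\vec H|\bigr)\,d\mu,$$
with a \emph{universal} constant $C_0$. Its value here is that the Sobolev constant does not depend on the surface, and the only extrinsic input is the mean curvature, which is already controlled by $\|\vec A\|_{L^2}\le B$ via $|\vec H|\le\sqrt{2}\,|\vec A|$.

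I would then apply this inequality with $f=\eta|\vec A|^{k}$ for $k=1,2,\dots$ and cutoff functions $\eta$ supported in small balls; together with H\"older and the assumed $L^2$-bounds on $\vec A$, $\nabla\vec A$, and $\nabla^2\vec A$, this produces a chain of local $L^p$-estimates for $\vec A$. The mean-curvature contribution $\int f^2|\vec H|$ that appears on the right-hand side can only be absorbed into the left when the local energy $\int_{B_r(p)}|\vec A|^2$ lies below a small universal threshold $\varepsilon_0$; since $\int_\Sigma|\vec A|^2\le B^2$, a cover of $\Sigma$ by such small-energy balls exists with at most $\lceil B^2/\varepsilon_0\rceil$ pieces. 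The volume lower bound $|\Sigma|\ge m$ enters here to control the multiplicity of the cover, so that the local estimates assemble into a single global estimate depending only on $B$ and $m$.

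Once $\|\vec A\|_{L^p(\Sigma)}\le C(B,m)$ is obtained for some $p>2$, combining it with the $L^2$-bound on $\nabla^2\vec A$ through a standard Sobolev embedding in local graphical charts --- whose existence on each small-energy ball follows from Simon's $\varepsilon_0$-regularity for surfaces of finite $\int|\vec A|^2$ --- yields the desired pointwise estimate $\|\vec A\|_{C^0}\le C(B,m)$. The main obstacle is arranging the small-energy covering with a uniformly positive radius: a priori, $\vec A$ could concentrate at a point so that every small ball fails the $\varepsilon_0$-condition. This scenario is precluded in two dimensions by the $H^{2,2}$-regularity of $\vec A$, via a compactness argument such as Langer's smooth compactness theorem for immersed surfaces with $L^p$-bounded curvature for some $p>2$: otherwise a subsequence of hypothetical counterexamples would converge to a limit on which $\vec A$ is bounded, contradicting blow-up at a point. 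With the uniform cover in place, the Moser-type iteration in the second step and the Sobolev embedding in the third step are routine, though the bookkeeping of cutoff derivatives requires care.
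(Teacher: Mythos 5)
There is a genuine gap, and it sits exactly at the point you flag as ``the main obstacle.'' Your Michael--Simon/Moser iteration machinery is fine as far as it goes, but everything hinges on producing a \emph{uniformly positive} radius $\rho$ for the small-energy cover, since the local $\ep_0$-regularity estimates carry a factor $\rho^{-2}\int_{B_\rho}|\vec{A}|^2$. Your proposed fix --- rule out concentration by applying Langer's compactness theorem to a sequence of hypothetical counterexamples --- is circular: Langer (and Breuning's version used in the paper) requires a uniform $L^p$ bound on $\vec{A}$ with $p>2$ (or $L^\infty$), which is precisely the kind of bound you are trying to establish; the hypothesis $\norm{\vec{A}}_{H^{2,2}}\le B$ only gives $L^2$ control of $\vec{A}$ and its first two covariant derivatives, and in dimension two $\int|\vec{A}|^2$ is scale-invariant, so no $L^p$, $p>2$, bound follows without a uniform Sobolev constant --- again the content of the theorem. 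Worse, your stated reason that concentration ``is precluded in two dimensions by the $H^{2,2}$-regularity of $\vec{A}$'' is simply false: round spheres of radius $r\to 0$ in $\Real^3\subset\Real^4$ have $\norm{\vec{A}}_{H^{2,2}}=\sqrt{8\pi}$ uniformly (all covariant derivatives vanish) yet $\norm{\vec{A}}_{C^0}=\sqrt{2}/r\to\infty$, and every ball of radius $\gtrsim r$ carries energy $8\pi>\ep_0$. This also shows you have misidentified the role of $|\Si|\ge m$: it is not there to ``control the multiplicity of the cover'' (a \emph{lower} volume bound controls no such thing); it is the hypothesis that excludes exactly this collapsing scenario, and any correct proof must use it at the concentration step.

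For contrast, the paper's proof is a direct blow-up argument that avoids your circularity by rescaling \emph{before} invoking compactness: given counterexamples $\Si_k$ with $\Ld_k:=\norm{\vec{A}_k}_{C^0}\to\infty$, one dilates by $r_k^{-1}=\Ld_k$ about the maximum point, so the rescaled surfaces satisfy $|\vec{A}_k'|\le 1$ with equality at the origin --- now Langer/Breuning compactness legitimately applies. Scale invariance gives $\norm{\vec{A}_k'}_{L^2}=\norm{\vec{A}_k}_{L^2}\le B$, while $\norm{\n\vec{A}_k'}_{L^2}=r_k\norm{\n\vec{A}_k}_{L^2}\to 0$, so the limit surface has $|\vec{A}_0|\equiv 1$ (Kato's inequality); and $|\Si_k'|=r_k^{-2}|\Si_k|\ge r_k^{-2}m\to\infty$ forces the limit to have infinite area, hence infinite $L^2$-norm of $\vec{A}_0$, contradicting the bound $B$. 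If you want to salvage your route, you would need to replace your third paragraph by essentially this rescaling argument to obtain the uniform non-concentration radius; at that point the Moser iteration becomes redundant.
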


\begin{remark}
The above theorem actually holds for two dimensional immersed surfaces in any higher dimensional Euclidean spaces. In order to prove this theorem, we establish a compactness theorem of surfaces with bounded Sobolev norms of the second fundamental forms, which is a generalized version of the compactness theorem due to Langer~\cite{Langer}. See Theorem~\ref{t:compactness1}, Theorem~\ref{t:compactness2} and Remark~\ref{r:1} below for details.
\end{remark}

Once we have the uniform $C^0$-bound of the second fundamental form $\textbf{A}_\ep$, we can derive a uniform bound on the Sobolev norms of $\textbf{A}_\ep$ on a fixed time interval from the evolution equations. Then the convergence of $F_\ep$ and the existence of a solution to the SMCF (\ref{e:SMCF1}) follows from standard arguments.

The rest of the paper is organized as follows. In Section~\ref{s:pre}, we show the Hamiltonian structure and some basic properties of the SMCF. Section~\ref{s:interpolation} is devoted to a compactness theorem for surfaces and the key Theorem~\ref{t:main1}, which is needed in the proof of our main existence result. In Section~\ref{s:app}, we apply the approximating scheme and study the evolution equations of various geometric quantities under the perturbed SMCF~(\ref{e:pSMCF}). Finally the proof of Theorem~\ref{t:main} is finished in Section~\ref{s:existence}.

\vspace{.2in}
\subsection*{Acknowledgements}

The work was initiated during a visit of C.S. at University of Kentucky in 2012, which was supported by the AMS Fan Fund China Exchange program. C.S. would like to thank Prof. Changyou Wang for his generous help and for sharing his ideas. Both authors are grateful to Prof. Youde Wang for his constant support and for pointing out a gap in a previous version of this paper. They would like to thank Prof. Yuxiang Li for sharing his expertise in blow-up analysis and for inspiring discussions on Theorem \ref{t:main1}.

\section{Preliminaries}\label{s:pre}

\subsection{Hamiltonian structure of SMCF}\label{s:sym}

A geometric point of view is to regard the SMCF as a Hamiltonian flow in an infinite dimensional symplectic manifold. In fact, given a Riemannian manifold, the space of co-dimension two submanifolds forms an infinite dimensional symplectic manifold. The induced volume of the
submanifolds defines an energy function on the symplectic manifold and its Hamiltonian flow is exactly the SMCF.

More precisely, suppose $\Si$ is an $n$ dimensional oriented manifold and $(\overline{M},\bar{g})$ is an $(n+2)$ dimensional oriented Riemannian manifold. Let $\mathcal{I}:=\text{Imm}(\Si, \overline{M})/\sim$ denote the
space of smooth immersions from $\Si$ into $\overline{M}$ modulo diffeomorphisms of $\Si$. Obviously, for each
immersion $F\in \text{Imm}(\Si, \overline{M})$, every representative $F\circ \phi$ in its equivalent class $[F]\in\mathcal{I}$ have same image $\overline{\Si}:=F(\Si)\subset \overline{M}$. We denote the normal bundle of $\overline{\Si}$ by $\mathcal{N}\overline{\Si}$ and the space of smooth sections of the normal bundle by $\Ga(\mathcal{N}\overline{\Si})$. Observe that the tangent space of $\mathcal{I}$ at $[F]$ can naturally be identified with $\Ga(\mathcal{N}\overline{\Si})$.

Since $\overline{\Si}$ is a codimension two submanifold, there is a canonical complex structure $J$ on the normal bundle $\mathcal{N}\overline{\Si}$ which simply rotate a normal vector positively by $\pi/2$ in each normal plane. This gives a complex structure on $T_{[F]}\mathcal{I}=\Ga(\mathcal{N}\overline{\Si})$ at each $[F]\in \mathcal{I}$ and yields a global complex structure $\mathcal{J}$ on $\mathcal{I}$.

Given a volume form $d\bar{\mu}$ on $\overline{M}$, we have a natural symplectic structure $\Om$ on $\mathcal{I}$, which was first discovered by Marsden and Weinstein~\cite{MW} for $n=1$, defined pointwisely by
\begin{equation*}
  \Omega|_{[F]}(V, W) = \int_{\overline{\Si}} i_V\circ i_Wd\bar{\mu}|_{\overline{\Si}}
\end{equation*}
for any $V , W\in T_{[F]}\mathcal{I}$. Thus we get an infinite dimensional symplectic manifold $(\mathcal{I},\Om)$.

When there is a Riemannian metric $\bar{g}$ on $\overline{M}$, we may also define an induced metric $G$ on $\mathcal{I}$ by
\begin{equation*}
  G|_{[F]}(V, W) = \int_{\overline{\Si}} \bar{g}(V,W)d\mu,
\end{equation*}
where $d\mu$ is the induced volume form on $\overline{\Si}$.

More importantly, the symplectic structure $\Om$, the metric $G$ and the complex structure $\mathcal{J}$ are compatible, i.e.
\begin{equation*}
  \Om|_{[F]}(V, W) = G|_{[F]}(V, JW).
\end{equation*}

Now the volume of $\overline{\Si}$ defines a functional on $\mathcal{I}$ by
\begin{equation*}
  \mathscr{V}([F]):=\text{vol}(\overline{\Si}).
\end{equation*}
It is well-known that the mean curvature is the gradient vector field of $\mathscr{V}$ in this setting. Using the complex structure $\mathcal{J}$, we can define the corresponding Hamiltonian flow by
\[ \frac{d}{dt}[F]=J\H.\]
which is equivalent to
\[ \(\frac{d}{dt}F\)^\bot=J\H,\]
where $\bot$ denotes the projection to the normal bundle.
Thus the SMCF is just the Hamiltonian flow of the volume function $\mathscr{V}$ in the symplectic manifold $(\mathcal{I}, \Om, \mathcal{J})$.

\subsection{The principal symbol of SMCF}\label{s:symbol}

Although the SMCF and the well-known MCF only differs by the complex structure $J$, the behaviour of SMCF is
totally different from that of MCF. In fact, the SMCF is no longer a (degenerate) parabolic-type equation, but a \sch -type equation since $J$ is skew-symmetric. Here we compute the
principal symbol of SMCF in Euclidean space to illustrate the \sch nature of SMCF. The general case is essentially the same.

For an immersion $F:\Sigma\to \mathbb{R}^{n+2}$, we have
\begin{equation*}
\Delta_{g} F=\textbf{H},
\end{equation*}
where $\Delta_{g}$ is the Laplace operator on $\Sigma$ of the induced metric $g$. By definition, in local coordinates, the induced metric is given by
\begin{equation*}
g_{ij}=\< \frac{\partial F}{\partial x_i}, \frac{\partial F}{\partial x_j} \>.
\end{equation*}
Here, $\< \cdot, \cdot\>$ is the inner product in $\mathbb{R}^{n+2}$. Suppose the standard coordinate on $\mathbb{R}^{n+2}$ is given by $\{y^{\alpha}\}^{n+2}_{\alpha=1}$.  Let $(g^{ij})$
be the inverse matrix of $(g_{ij})$. Then the Christoffel symbol of the induced metric is
\begin{eqnarray*}
\Gamma_{ij}^{k}
&  =  &  \frac{1}{2}g^{kl}\left\{\frac{\partial g_{il}}{\partial x_j}+\frac{\partial g_{jl}}{\partial x_i}
           -\frac{\partial g_{ij}}{\partial x_l}\right\} \nonumber \\
&  =  &  g^{kl}\frac{\partial^2 F^{\beta}}{\partial x_i\partial x_j}\frac{\partial F^{\beta}}{\partial x_l}.
\end{eqnarray*}
Thus we have
\begin{eqnarray}\label{E3.4}
\Delta_{g} F^{\alpha}
&  =  &  g^{ij}\left(\frac{\partial^2 F^{\alpha}}{\partial x_i\partial x_j}-\Gamma_{ij}^{k}\frac{\partial F^{\alpha}}{\partial x_k} \right) \nonumber \\
&  =  &  g^{ij}\left(\frac{\partial^2 F^{\alpha}}{\partial x_i\partial x_j}-
          g^{kl}\frac{\partial^2 F^{\beta}}{\partial x_i\partial x_j}\frac{\partial F^{\beta}}{\partial x_l}\frac{\partial F^{\alpha}}{\partial x_k} \right).
\end{eqnarray}

On the other hand, the complex structure only involves the first order terms of $F$. In fact, by embedding the Grassmannian manifold in the exterior product space $\Ld^n\Real^{n+2}$(see for example \cite{S}), we can write the Gauss map of $F$ by
\[ \rho(F)=\(\frac{\p F}{\p x^1}\wedge \cdots \wedge \frac{\p F}{\p x^n}\)\Big/ \Big|\frac{\p F}{\p x^1}\wedge \cdots \wedge \frac{\p F}{\p x^n}\Big|.\]
Then for any normal vector field $V$ on $F(\Si)$, the action of $J$ can be defined by
\[ JV=*(\rho(F)\wedge V),\]
where $*:\Ld^{n+1}\Real^{n+2}\to \Ld^1\Real^{n+2}=\Real^{n+2}$ is the canonical star operator in $\Real^{n+2}$.

From above discussions, we see that the SMCF (\ref{e:SMCF}) is a quasi-linear system. Denote $P(F)=J\textbf{H}=J\Delta_{g} F$. The linearization operator of $P$ at $F$ is given by
\begin{equation*}
D(P)(F)G =Jg^{ij}\left(\frac{\partial^2 G}{\partial x_i\partial x_j}-
          g^{kl}\<\frac{\partial^2 G}{\partial x_i\partial x_j},\frac{\partial F}{\partial x_l}\>\frac{\partial F}{\partial x_k} \right)+ \text{first\ order\  terms}.
\end{equation*}
The principal symbol is
\begin{eqnarray}\label{E3.6}
\sigma(D(P))(x,\xi)G
&  =  &  Jg^{ij}\left(\xi_i\xi_j G-
          g^{kl}\< G,\frac{\partial F}{\partial x_l}\> \xi_i\xi_j\frac{\partial F}{\partial x_k}\right) \nonumber \\
&  =  & |\xi|^2 J\left(G-g^{kl}\< G,\frac{\partial F}{\partial x_l}\>\frac{\partial F}{\partial x_k}\right)\nonumber \\
&  =  & |\xi|^2 J(G-G^\top)=|\xi|^2 JG^{\perp},
\end{eqnarray}
where $G$ is an any vector in $\mathbb{R}^{n+2}$, $G^\top$ and $G^{\perp}$ are the tangent part and the normal part of $G$ on $\Sigma$. Then we have
\begin{equation*}
\<\sigma(D(P))(x,\xi)G,G \>=\<|\xi|^2 JG^{\perp},G\>=|\xi|^2 \< JG^{\perp},G^{\perp}\>=0.
\end{equation*}
Here we used the fact that $J$ is an isomorphism on the normal bundle. Thus the principal symbol of $P$ is skew-symmetric. In particular, the SMCF is a (degenerate) \sch type
non-linear partial differential equation.

\subsection{Basic properties of SMCF}

In this subsection we show two basic properties of SMCF. Note that these properties hold in arbitrary ambient Riemannian manifold $(\overline{M}, \bar{g})$.

Suppose $F:I\times\Si\to\overline{M}$ is a solution to the SMCF (\ref{e:SMCF}). Denote the inner product induced by $\bar{g}$ by $\<\cdot, \cdot\>$ and the corresponding Levi-Civita connection on $\overline{M}$ by $\overline{\n}$. For each $t\in I$, there is an induced metric $g=g(t)$ and volume form $d\mu = d\mu(t)$ on the surface $\Si$. The most important property of the SMCF is the following lemma.

\begin{lemma}\label{l:metric}
  The induced volume form is preserved under the SMCF. In particular, for a compact manifold, the volume is preserved under the SMCF.
\end{lemma}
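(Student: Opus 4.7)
The plan is to compute the evolution of the induced volume form $d\mu = \sqrt{\det g}\, dx$ directly and use the crucial fact that the velocity vector $J\H$ is orthogonal to $\H$.

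First I would compute, in local coordinates, the time derivative of the induced metric $g_{ij} = \langle \p_i F, \p_j F\rangle$ along a general flow $\p_t F = V$. Using $\overline{\n}_t \p_i F = \overline{\n}_i V$ and the compatibility of $\overline{\n}$ with $\bar{g}$, one obtains
\begin{equation*}
  \p_t g_{ij} = \langle \overline{\n}_i V, \p_j F\rangle + \langle \overline{\n}_j V, \p_i F\rangle.
\end{equation*}
Since $V = J\H$ is a normal vector field, $\langle V, \p_j F\rangle \equiv 0$, so $\langle \overline{\n}_i V, \p_j F\rangle = -\langle V, \overline{\n}_i \p_j F\rangle = -\langle V, \vec{A}_{ij}\rangle$, where $\vec{A}_{ij}$ denotes the second fundamental form. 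Thus
\begin{equation*}
  \p_t g_{ij} = -2\langle V, \vec{A}_{ij}\rangle.
\end{equation*}

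Next I would use Jacobi's formula $\p_t \det g = \det g \cdot g^{ij}\p_t g_{ij}$ to deduce
\begin{equation*}
  \p_t \sqrt{\det g} = -\langle V, g^{ij}\vec{A}_{ij}\rangle\sqrt{\det g} = -\langle V, \H\rangle\sqrt{\det g}.
\end{equation*}
This is the general first variation formula: any normal deformation changes the volume form by $-\langle V, \H\rangle\, d\mu$.

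Finally I would specialize to $V = J\H$. Since $J$ is defined as rotation by $\pi/2$ in each fiber of the normal bundle, it is an isometry with $J^2 = -\mathrm{id}$ and hence skew-symmetric, which forces $\langle J\H, \H\rangle = 0$ pointwise. Therefore $\p_t d\mu = 0$, so the induced volume form is preserved pointwise on $\Si$, and integration yields volume preservation in the compact case. There is no serious obstacle here; the statement is a direct consequence of the first variation formula combined with the orthogonality $J\H \perp \H$, which is precisely the symplectic/Hamiltonian feature noted in Section~\ref{s:sym}.
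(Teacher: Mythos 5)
Your proposal is correct and follows essentially the same route as the paper: compute $\p_t g_{ij}=-2\langle J\H,\vec{A}(e_i,e_j)\rangle$ from the orthogonality of the normal velocity to the tangent vectors, pass to the volume form via $\p_t\, d\mu=\tfrac12 g^{ij}\p_t g_{ij}\, d\mu$ (your Jacobi-formula step), and conclude from $\langle J\H,\H\rangle=0$ by skew-symmetry of $J$. No gaps.
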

\begin{proof}
We prove it point-wisely so that we can take normal coordinates near a point $x\in\Sigma$. The induced metric $g$ is given by
\begin{equation*}
    g_{ij}=\< \frac{\partial F}{\partial x_i}, \frac{\partial F}{\partial x_j}\>.
\end{equation*}
Since $\< \frac{\partial F}{\partial t}, \frac{\partial F}{\partial x_j}\>=0$, it follows that
\begin{equation*}
\begin{aligned}
 \frac{\p}{\p t}g_{ij}&=\< \overline{\n}_i\frac{\partial F}{\partial t}, \frac{\partial F}{\partial x_j}\>
       +\< \frac{\partial F}{\partial x_i}, \overline{\n}_j\frac{\partial F}{\partial t}\> \\
   &= -2\< \frac{\partial F}{\partial t}, \overline{\n}_i\frac{\partial F}{\partial x_j}\>
      =-2\< J\textbf{H}, \textbf{A}(e_i,e_j)\>.
\end{aligned}
\end{equation*}
Consequently, we have
\begin{equation*}
    \frac{\partial}{\partial t}d\mu=\frac{1}{2}g^{kl} \frac{\partial}{\partial t}g_{kl}d\mu
    =-\< J\textbf{H}, \textbf{H}\>d\mu=0.
\end{equation*}
This shows that the volume form $d\mu$, and hence the volume $\text{Vol}(\Si):=\int_\Si d\mu$ is preserved under the SMCF.
\end{proof}

Since the metric on an one dimensional manifold is completely decided by its volume form, we have

\begin{lemma}\label{c1}
 The induced metric is preserved under the 1 dimensional SMCF.
\end{lemma}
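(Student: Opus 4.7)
The plan is to exploit the very computation used to establish volume preservation in the previous lemma, observing that in one dimension the full metric tensor is a scalar quantity and coincides (up to a factor) with its own trace.

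First, I would recall that the preceding lemma gives the pointwise identity
\begin{equation*}
\frac{\partial}{\partial t} g_{ij} = -2\langle J\mathbf{H}, \mathbf{A}(e_i,e_j)\rangle
\end{equation*}
for any dimension $n$, derived by differentiating $g_{ij}=\langle F_{x_i},F_{x_j}\rangle$ and using $\partial_t F = J\mathbf{H}$ together with the symmetry of the second fundamental form. This is the only evolution identity I need.

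Next, I would specialize to $n=1$ and work pointwise in a normal coordinate at an arbitrary point of $\Sigma$, so that the local frame $e_1$ is unit-length at that point. In one dimension the second fundamental form has only the single component $\mathbf{A}(e_1,e_1)$, and this component equals the mean curvature vector $\mathbf{H}$. Substituting into the evolution identity yields
\begin{equation*}
\frac{\partial}{\partial t} g_{11} = -2\langle J\mathbf{H}, \mathbf{H}\rangle.
\end{equation*}

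The conclusion then follows from the skew-symmetry of $J$ on the normal bundle: since $J$ is an orthogonal complex structure fiberwise, $\langle J\mathbf{H},\mathbf{H}\rangle = 0$, so $\partial_t g_{11}=0$ at every point. Because this holds at every point of $\Sigma$ and the metric is a scalar in one dimension, the full induced metric $g$ is preserved under the one-dimensional SMCF. I do not expect any serious obstacle here — the lemma is a clean consequence of the computation already carried out, and the only observation needed is that in dimension one there is no distinction between the mean curvature vector and the (sole) entry of $\mathbf{A}$, so the skew-symmetry of $J$ upgrades the preservation of volume to preservation of the metric itself. Alternatively, one may simply note that in one dimension the metric is determined by the volume form, and invoke the previous lemma directly.
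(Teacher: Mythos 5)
Your proof is correct and rests on the same computation the paper uses: in dimension one $\mathbf{A}(e_1,e_1)$ is a scalar multiple of $\mathbf{H}$, so $\partial_t g_{11}=-2\langle J\mathbf{H},\mathbf{A}(e_1,e_1)\rangle$ vanishes by skew-symmetry of $J$, which is exactly the content of the preceding volume-form lemma specialized to $n=1$ (where the volume form determines the metric). The paper simply states the one-line reduction you offer as your alternative at the end, so the two arguments coincide.
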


Usually along the SMCF, when we talk about the evolution of geometric quantities, e.g. the metric, we first fix a time $t\in I$ and pull back everything induced from the ambient manifold to the base manifold $\Si$. Then we let $t$ vary and derive the equation of a time-dependent quantity on $\Si$.

On the other hand, it is also convenient to consider the whole pull-back bundle $F^*T\overline{M}$ which is defined over the product space $I\times \Si$. This bundle splits in an obvious way into the "spacial" subbundle $\mathcal{H}$ and the normal subbundle $\mathcal{N}$ such that, for each time $t\in I$, the restriction of $\mathcal{H}$ on $\{t\}\times \Si$ is the tangent bundle $F^*T\Si_t$ and the restriction of $\mathcal{N}$ is the normal bundle $F^*N\Si_t$, where $\Si_t=F(t,\Si)$. Moreover, by pulling back the ambient metric $\bar{g}$ and connection $\bar{\n}$ on $\bar{M}$, there are naturally induced metrics $g^\mathcal{H}, g^\mathcal{N}$ and connections $\n^\mathcal{H}, \n^\mathcal{N}$ defined on the bundles $\mathcal{H}$ and $\mathcal{N}$, respectively. It is easy to see that the connections are both compatible with corresponding metrics, i.e.
\[ \n^{\mathcal{H}}g^{\mathcal{H}}=\n^{\mathcal{N}}g^{\mathcal{N}}=0. \]
For a detailed discussion on the structure of bundles of a time-depend immersion, we refer to Chapter 2 of Baker's thesis~\cite{Ba}. In the following, we will simply denote the normal connection $\n^\mathcal{N}$  by $\n$.

Now we regard the complex structure $J$ as a tensor defined on the normal bundle $\mathcal{N}$. The next lemma is crucial for the calculations of evolution equations of SMCF.

\begin{lemma}\label{l:parallel}
The complex structure is parallel w.r.t. the normal connection, i.e. $\nabla J = 0$.
\end{lemma}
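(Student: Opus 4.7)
The plan is to exploit the fact that, on an oriented rank-two Euclidean vector bundle, the almost complex structure $J$ (rotation by $\pi/2$) is uniquely determined by the fiberwise metric together with the orientation. Consequently, any linear connection that is simultaneously metric-compatible and orientation-preserving must preserve $J$ as well. The normal connection $\nabla$ is induced from the ambient Levi-Civita connection $\overline{\nabla}$, hence is metric with respect to the restriction of $\bar g$ to the normal bundle $\mathcal N$, and it preserves orientation since $\overline{\nabla}$ preserves the ambient orientation and $\Sigma$ carries a fixed orientation. This is the structural reason for $\nabla J=0$; the rest is a short verification in a local frame.

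Concretely, I would pick a local oriented orthonormal frame $\{\nu_1,\nu_2\}$ of $\mathcal N$ with $J\nu_1=\nu_2$ and $J\nu_2=-\nu_1$. Metric compatibility of $\nabla$ together with the rank-two orientation forces the connection matrix in this frame to be skew-symmetric with only one independent entry, so one can write
\begin{equation*}
\nabla \nu_1 = \omega\otimes \nu_2,\qquad \nabla \nu_2 = -\omega\otimes \nu_1
\end{equation*}
for some real $1$-form $\omega$ on $\Sigma$. For a general normal section $V=a\nu_1+b\nu_2$, I would compute $\nabla(JV)$ and $J(\nabla V)$ separately using the Leibniz rule and the displayed formulas, and observe that both equal $(da-b\omega)\otimes\nu_2-(db+a\omega)\otimes\nu_1$. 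Hence $(\nabla J)V=\nabla(JV)-J(\nabla V)=0$ for every $V$, which is the claim.

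There is really no serious obstacle here; the only point that deserves emphasis is why the normal connection preserves the orientation of $\mathcal N$, which in turn is what pins down $J$ rather than $-J$. After that, the proof reduces to the one-line skew-symmetric-matrix calculation above, independent of the choice of frame since $J$ itself is globally defined.
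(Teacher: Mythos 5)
Your proof is correct and follows essentially the same route as the paper: both arguments reduce the claim to the skew-symmetry of the normal connection matrix in an orthonormal frame adapted to $J$ (the paper uses the frame $\{\vec{V},J\vec{V}\}$ for a unit normal field $\vec{V}$, you use a general oriented orthonormal frame $\{\nu_1,\nu_2\}$ with $J\nu_1=\nu_2$), and then verify $\nabla(JV)=J(\nabla V)$ by a one-line computation. Your extra remark on orientation-preservation is a nice conceptual framing but does not change the substance of the argument.
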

\begin{proof}
It suffices to show that for any locally supported unit normal vector field $\textbf{V}$ in the normal bundle, we have
\begin{equation*}
    J\nabla \textbf{V}=\nabla J\textbf{V},
\end{equation*}
where $\nabla$ is the induced connection in the normal bundle. Set $\textbf{W}=J\textbf{V}$ so that $\textbf{V}=-J\textbf{W}$, then $\{\textbf{V},\textbf{W}\}$ forms a local orthonormal
frame. Therefore, for any tangential vector field $X\in T(I\times\Sigma)$, we have
\begin{equation*}
\begin{aligned}
J \nabla_X\textbf{V}   &=  J(\overline{\nabla}_X\textbf{V})^{\perp}\\
    &=J\left(\<\overline{\nabla}_X\textbf{V},\textbf{V}\>\textbf{V}
    +\<\overline{\nabla}_X\textbf{V},\textbf{W}\>\textbf{W}\right)\\
   &= -\<\overline{\nabla}_X\textbf{V},\textbf{W}\>\textbf{V},
\end{aligned}
\end{equation*}
and
\begin{equation*}
\begin{aligned}
 \nabla_X(J\textbf{V})&=\nabla_X\textbf{W}=(\overline{\nabla}_X\textbf{W})^{\perp}\\
    &=\<\overline{\nabla}_X\textbf{W},\textbf{V}\>\textbf{V}
    +\<\overline{\nabla}_X\textbf{W},\textbf{W}\>\textbf{W}\\
    &= -\<\overline{\nabla}_X\textbf{V},\textbf{W}\>\textbf{V}.
\end{aligned}
\end{equation*}
This proves the lemma.
\end{proof}

In particular, Lemma~\ref{l:parallel} shows that the complex structure is parallel along the time direction, i.e. $\n_t J=0$.

\section{Estimate of the Second Fundamental Form}\label{s:interpolation}

\subsection{Estimates for graphs}

In this section, we consider a graph in $(n+m)$-dimensional Euclidean space defined on an $n$-dimensional domain and derive some basic estimates of the second fundamental form. The constants emerging in the calculations may depend on the dimensions, but we will not emphasize it since $n$ and $m$ are always fixed in the application.

Let $u:\Omega\subset \mathbb{R}^n\to \mathbb{R}^m$ be a smooth function, where $\Omega$ is a bounded domain on $\mathbb{R}^n$. Let $\Sigma:=Graph(u)$ denote the graph of $u$, which can be represented by a map $F:\Omega\to \mathbb{R}^{n+m}$ given by
\begin{equation*}\label{E6.1}
    F(x_1,\cdots,x_n):=(x_1,\cdots,x_n,u_1(x_1,\cdots,x_n),\cdots,u_m(x_1,\cdots,x_n)).
\end{equation*}

Here and in the sequel, we will always use $\< \cdot,\cdot\>$ to denote various standard inner products on $\mathbb{R}^{n}$, $\mathbb{R}^m$ and $\mathbb{R}^{n+m}$ without confusions. We will also use $|\cdot|$ and $|\cdot|_g$ to denote the standard Euclidean metric and the induced metric on $\Sigma$, respectively. Moreover, let's agree on the following index ranges
\begin{equation*}
    1\leq i,j,k,l\leq n, \ \ \ 1\leq \alpha,\beta,\gamma\leq m.
\end{equation*}

Denote the partial derivative of $u$ by $D_iu:=D_{x_i}u$.
It is easy to see that a basis of the tangent space $T\Sigma$ of $\Sigma$ can be given by
\begin{equation}\label{E6.2}
    e_i=\frac{\partial F}{\partial x_i}
       =(0,\cdots,0,\underbrace{1}_i,0,\cdots,0,D_iu),
\end{equation}
while a basis of the normal space $N\Sigma$ of $\Sigma$ can be given by
\begin{equation*}
    \nu_{\alpha}
       =(-Du_{\alpha},0,\cdots,0,\underbrace{1}_{n+\alpha},0,\cdots,0).
\end{equation*}
By (\ref{E6.2}), the induced metric on $T\Sigma$ is given by
\begin{equation}\label{E6.4}
    g_{ij}=\< e_i,e_j\>=\delta_{ij}+\< D_iu,D_ju\>.
\end{equation}
Similarly, the induced metric on $N\Sigma$ is given by
\begin{equation}\label{E6.5}
    g_{\alpha\beta}=\< \nu_{\alpha},\nu_{\beta}\>=\delta_{\alpha\beta}+\< Du_{\alpha},Du_{\beta}\>.
\end{equation}
Let $(g^{ij})$ and $(g^{\alpha\beta})$ denote the inverse of $(g_{ij})$ and $(g_{\alpha\beta})$, respectively. Since
\begin{equation*}
    \frac{\partial^2 F}{\partial x_i\partial x_j}=\left(0,\cdots,0,
        \frac{\partial^2 u_1}{\partial x_i\partial x_j},\cdots,\frac{\partial^2 u_m}{\partial x_i\partial x_j}\right)
       =(0,D^2_{ij}u),
\end{equation*}
the second fundamental form of $\Sigma$ is given by
\begin{equation*}
    \textbf{A}(e_i,e_j)=\left(\frac{\partial^2 F}{\partial x_i\partial x_j}\right)^{\perp}
    =g^{\alpha\beta}\<\frac{\partial^2 F}{\partial x_i\partial x_j},\nu_{\beta}\>\nu_{\alpha}
    =g^{\gamma\beta}\frac{\partial^2 u_{\beta}}{\partial x_i\partial x_j}\nu_{\gamma},
\end{equation*}
and the component of the second fundamental form is
\begin{equation}\label{E6.7}
    h_{\alpha ij}=\<\textbf{A}(e_i,e_j),\nu_{\alpha}\>=\frac{\partial^2 u_{\alpha}}{\partial x_i\partial x_j}.
\end{equation}
From (\ref{E6.4}), we can easily see that the eigenvalues $\{\lambda_i\}_{1\leq i\leq n}$ of $(g_{ij})$ satisfy
\begin{equation}\label{E6.8}
    1\leq \lambda_i\leq 1+|Du|^2.
\end{equation}
Similarly,
the eigenvalues $\{\mu_{\alpha}\}_{1\leq \alpha\leq m}$ of $(g_{\alpha\beta})$ satisfy
\begin{equation*}
    1\leq \mu_{\alpha}\leq 1+|Du|^2.
\end{equation*}
Therefore, the eigenvalues of $(g^{ij})$ and $(g^{\alpha\beta})$ can be bounded by
\begin{equation}\label{E6.10}
    \frac{1}{1+|Du|^2}\leq \lambda_i^{-1}\leq 1, \ \ \ \frac{1}{1+|Du|^2}\leq \mu_{\alpha}^{-1}\leq 1.
\end{equation}

Since
\begin{equation*}
    |\textbf{A}|_g^2=g^{ik}g^{jl}g^{\alpha\beta}h_{\alpha ij}h_{\beta kl}
       =g^{ik}g^{jl}g^{\alpha\beta}\frac{\partial^2 u_{\alpha}}{\partial x_i\partial x_j}\frac{\partial^2 u_{\beta}}{\partial x_k\partial x_l},
\end{equation*}
it follows easily from  (\ref{E6.10}) that

\begin{lemma}\label{lem6.1}
\begin{equation}\label{E6.11}
    |\textbf{A}|_g^2\leq |D^2u|^2\leq (1+|Du|^2)^3|\textbf{A}|_g^2.
\end{equation}
\end{lemma}

Next, in order to estimate the derivatives of $\textbf{A}$, we need to compute the Christoffel symbols associated to the connection. Denote the Levi-Civita connection on $\mathbb{R}^{n+m}$ by $\overline\nabla$ and the induced connection on $\Sigma$ by $\nabla$ respectively. The induced connection $\nabla$ applies to $\textbf{A}$ and naturally extends to tensor fields in $N\Sigma\otimes (T^*\Sigma)^k, k\in \mathbb{N}$. By definition, the Christoffel symbols are given
by
\begin{equation*}\label{E6.12}
    \nabla_{e_i}e_j=(\overline\nabla_{e_i}e_j)^T=\Gamma_{ij}^k e_k,\ \ \
    \nabla_{e_i}\nu_{\alpha}=(\overline\nabla_{e_i}\nu_{\alpha})^{\perp}=\Gamma_{i\alpha}^{\beta} \nu_{\beta}.
\end{equation*}
Since
\begin{equation*}
    (\overline\nabla_{e_i}e_j)^T= \left(\frac{\partial^2 F}{\partial x_i\partial x_j}\right)^{T}
        =g^{kl}\<\frac{\partial^2 F}{\partial x_i\partial x_j},e_l\> e_k=g^{kl}\< D_{ij}u,D_l u\> e_k,
\end{equation*}
we have
\begin{equation}\label{E6.13}
    \Gamma_{ij}^k=g^{kl}\< D_{ij}u,D_l u\>=g^{kl}\frac{\partial^2 u_{\alpha}}{\partial x_i\partial x_j}\frac{\partial u_{\alpha}}{\partial x_l}.
\end{equation}
Similarly, since
\begin{eqnarray*}
 (\overline\nabla_{e_i}\nu_{\alpha})^{\perp}
   &=& \left(\frac{\partial}{\partial x_i}\nu_{\alpha}\right)^{\perp}
       = \left(-\frac{\partial^2 u_{\alpha}}{\partial x_i\partial x_1},\cdots,-\frac{\partial^2 u_{\alpha}}{\partial x_i\partial x_n},
             0,\cdots,0\right)^{\perp}\\
   &=& g^{\beta\gamma}\<\left(-\frac{\partial^2 u_{\alpha}}{\partial x_i\partial x_1},\cdots,-\frac{\partial^2 u_{\alpha}}{\partial x_i\partial x_n},
             0,\cdots,0\right),\nu_{\gamma} \> \nu_{\beta}\\
   &=& g^{\beta\gamma}\frac{\partial^2 u_{\alpha}}{\partial x_i\partial x_k}\frac{\partial u_{\gamma}}{\partial x_k}\nu_{\beta},
\end{eqnarray*}
we have
\begin{equation}\label{E6.14}
    \Gamma_{i\alpha}^{\beta}=g^{\beta\gamma}\frac{\partial^2 u_{\alpha}}{\partial x_i\partial x_k}\frac{\partial u_{\gamma}}{\partial x_k}.
\end{equation}

By fixing the chosen frame, we may regard $\Ga$ as a vector field in some Euclidean space with components given by all $\Ga_{ij}^k$ and $\Ga_{i\al}^\be$. Or equivalently, we may introduce a standard metric such that the basis $\{e_i\}_{i=1}^n$ and $\{\nu_\al\}_{\al=1}^2$ are orthonormal. Similarly, we can also treat $\textbf{A}, u, g$ and
their derivatives as vectors in (probably different dimensional) Euclidean spaces. Then we still use $|\cdot|$ to denote their norms w.r.t. the standard metric in corresponding Euclidean spaces.
Moreover, we can simply write (\ref{E6.7}) as $\textbf{A}=D^2u$ and rewrite (\ref{E6.13}) and (\ref{E6.14}) as
\begin{equation}\label{E6.141}
  \Gamma=D^2u*Du*g^{-1},
\end{equation}
where $*$ denotes multiple linear combinations of components of the vectors. It follows from (\ref{E6.10}) that
\begin{equation}\label{E6.142}
  |\Gamma|\le C|D^2u|\cdot|Du|.
\end{equation}

\begin{lemma}\label{lem6.2}
There exists a constant $C$ such that
\begin{equation*}
    |\nabla\textbf{A}|_g\leq |D^3u|+C|Du||D^2u|^2,
\end{equation*}
and
\begin{equation*}
    |D^3u|\leq (1+|Du|^2)^2|\nabla\textbf{A}|_g+C|Du||D^2u|^2.
\end{equation*}
\end{lemma}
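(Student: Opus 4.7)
The plan is to express $\nabla \vec{A}$ in local coordinates as the partial derivative plus a Christoffel correction, and then compare the metric norm $|\cdot|_g$ with the Euclidean norm $|\cdot|$ using the eigenvalue bounds (\ref{E6.10}).

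First I would unravel the definition of the covariant derivative of $\vec{A}$. Writing $\vec{A}=h_{\al ij}\nu_\al\otimes dx^i\otimes dx^j$ and using the Christoffel symbols for both $T\Si$ and $N\Si$ computed in (\ref{E6.13})--(\ref{E6.14}), we obtain schematically
\begin{equation*}
\nabla \vec{A} \;=\; \partial \vec{A} + \Gamma * \vec{A}.
\end{equation*}
Because $h_{\al ij}=\p^2 u_\al/\p x_i\p x_j$ by (\ref{E6.7}), the components of $\p \vec A$ coincide (up to relabeling) with those of $D^3u$, so $|\p\vec A|=|D^3u|$ in the Euclidean norm. Combined with the bound $|\Ga|\le C|D^2u|\,|Du|$ from (\ref{E6.142}) and the trivial pointwise bound $|\vec A|\le |D^2u|$, the triangle inequality immediately yields
\begin{equation*}
\bigl||D^3u|-|\nabla\vec A|_{\text{Euc}}\bigr|\le C|Du|\,|D^2u|^2.
\end{equation*}

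The remaining step is to pass between the Euclidean norm and the induced-metric norm of the $(0,3)$-tensor $\nabla\vec A$ (with one normal-bundle index). From (\ref{E6.10}), every eigenvalue of $g^{ij}$ and of $g^{\al\be}$ lies in $[(1+|Du|^2)^{-1},1]$. Contracting $\nabla\vec A$ with itself against three copies of $g^{-1}$ on the spacial indices and one on the normal index gives on one hand $|\nabla\vec A|_g\le |\nabla\vec A|_{\text{Euc}}$ (each eigenvalue $\le 1$), which combined with the triangle inequality above produces the first estimate
\begin{equation*}
|\nabla\vec A|_g\le |D^3u|+C|Du|\,|D^2u|^2.
\end{equation*}
On the other hand, the lower eigenvalue bound gives $|\nabla\vec A|_{\text{Euc}}^2\le (1+|Du|^2)^4|\nabla\vec A|_g^2$, i.e.\ $|\nabla\vec A|_{\text{Euc}}\le (1+|Du|^2)^2|\nabla\vec A|_g$, from which the second estimate follows.

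There is no real obstacle here; the only thing to keep straight is the bookkeeping of how many factors of $g^{-1}$ appear when converting between the two norms (one per index of $\nabla\vec A$, giving the exponent $2$ in $(1+|Du|^2)^2$), together with the fact that $\p\vec A$ and $D^3u$ agree as Euclidean-valued arrays.
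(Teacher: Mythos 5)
Your proposal is correct and follows essentially the same route as the paper: write $\nabla\vec{A}=D^3u+\Gamma*D^2u$, bound $|\Gamma|\le C|Du|\,|D^2u|$ via (\ref{E6.142}), and convert between the Euclidean and induced norms using the eigenvalue bounds (\ref{E6.10}) on the four copies of $g^{-1}$ (three spatial, one normal), which is exactly how the paper obtains the factor $(1+|Du|^2)^2$. No gaps.
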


\begin{proof}
By definition, we have
\begin{equation}\label{E6.6}
|\nabla\textbf{A}|_g^2=h_{\alpha ij,k}h_{\beta pq,l}g^{ip}g^{jq}g^{kl}g^{\alpha\beta},
\end{equation}
where $h_{\alpha ij,k}$ stands for the $k$-th covariant derivative of $h_{\alpha ij}$, i.e.
\begin{equation*}\label{E6.17}
    h_{\alpha ij,k}=\frac{\partial h_{\alpha ij}}{\partial x_k}+\Gamma_{k i}^lh_{\alpha lj}+\Gamma_{k j}^lh_{\alpha il}+\Gamma_{k\alpha}^{\beta}h_{\beta ij}.
\end{equation*}
Using our convention, we may simply write
\begin{equation}\label{E6.9}
  \n \textbf{A} = D\textbf{A} + \Ga*\textbf{A}=D^3u + \Ga*D^2u.
\end{equation}

By (\ref{E6.6}) and (\ref{E6.10}), we have
\begin{equation*}
    |\nabla\textbf{A}|_g\leq |\n\textbf{A}|\le|D^3u|+C|\Ga|\cdot|D^2u|,
\end{equation*}
and
\begin{equation*}
    |\nabla\textbf{A}|_g\geq \frac{1}{(1+|Du|^2)^2}|\n\textbf{A}|
    \ge\frac{1}{(1+|Du|^2)^2}(|D^3u|-C|\Ga|\cdot|D^2u|).
\end{equation*}
Then the lemma follows from (\ref{E6.142}) and Lemma~\ref{lem6.1} easily.
\end{proof}

In order to compute higher order derivatives, we first note that from (\ref{E6.4}),
\begin{equation*}
    \frac{\partial g_{ij}}{\partial x_k}=\frac{\partial^2 u_{\alpha}}{\partial x_k\partial x_i}\frac{\partial u_{\alpha}}{\partial x_j}
        +\frac{\partial^2 u_{\alpha}}{\partial x_k\partial x_j}\frac{\partial u_{\alpha}}{\partial x_i},
\end{equation*}
which implies
\begin{equation*}
    \frac{\partial g^{ij}}{\partial x_k}=-g^{ip}g^{jq}\left(\frac{\partial^2 u_{\alpha}}{\partial x_k\partial x_p}\frac{\partial u_{\alpha}}{\partial x_q}
        +\frac{\partial^2 u_{\alpha}}{\partial x_k\partial x_q}\frac{\partial u_{\alpha}}{\partial x_p}\right).
\end{equation*}
Similarly, from (\ref{E6.5}), we have
\begin{equation*}
    \frac{\partial g_{\alpha\beta}}{\partial x_k}=\frac{\partial^2 u_{\alpha}}{\partial x_k\partial x_i}\frac{\partial u_{\beta}}{\partial x_i}
        +\frac{\partial^2 u_{\beta}}{\partial x_k\partial x_i}\frac{\partial u_{\alpha}}{\partial x_i},
\end{equation*}
which implies
\begin{equation*}
    \frac{\partial g^{\alpha\beta}}{\partial x_k}
    =-g^{\alpha\gamma}g^{\beta\delta}\left(\frac{\partial^2 u_{\gamma}}{\partial x_k\partial x_i}\frac{\partial u_{\delta}}{\partial x_i}
        +\frac{\partial^2 u_{\delta}}{\partial x_k\partial x_i}\frac{\partial u_{\gamma}}{\partial x_i}\right).
\end{equation*}
Equivalently, we may write
\begin{equation}\label{E6.3}
  Dg^{-1}=D^2u*Du*g^{-1}*g^{-1}.
\end{equation}

\begin{lemma}\label{lem6.3}
There exists a polynomial $P_k$ depending on $k$, such that
\begin{equation}\label{E6.20}
    |\nabla^k\textbf{A}|_g\leq |D^{k+2}u|+P_k(|Du|)\sum |D^{j_1+1}u|\cdots  |D^{j_s+1}u|
\end{equation}
and
\begin{equation}\label{E6.21}
    |D^{k+2}u|\leq (1+|Du|^2)^\frac{k+3}{2}|\nabla^k\textbf{A}|_g+P_k(|Du|)\sum |D^{j_1+1}u|\cdots  |D^{j_s+1}u|,
\end{equation}
where and the summations are taken over all indices $(j_1,\cdots,j_s)$ satisfying
\begin{equation}\label{E6.22}
j_1\geq j_2\cdots \geq j_s, \ k\geq j_i \geq 1, \ j_1+j_2+\cdots j_s=k+1.
\end{equation}
\end{lemma}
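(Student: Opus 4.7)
The plan is to prove both inequalities simultaneously by induction on $k$, with Lemma~\ref{lem6.2} serving as the base case $k=1$. Throughout, I keep track of "$*$-expressions" (formal sums of tensor products) and use the fact that the eigenvalues of $g^{-1}$ satisfy $(1+|Du|^2)^{-1} \le \lambda^{-1} \le 1$, which I already exploited to pass between Euclidean and metric norms in the previous lemmas.

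The inductive engine is the scalar-tensor identity
\begin{equation*}
\nabla^{k+1}\vec{A} = D(\nabla^{k}\vec{A}) + \Gamma * \nabla^{k}\vec{A},
\end{equation*}
together with the two differentiation rules already available: $\Gamma = D^2 u * Du * g^{-1}$ (equation (\ref{E6.141})) and $Dg^{-1} = D^2 u * Du * g^{-1} * g^{-1}$ (equation (\ref{E6.3})). By induction I will assume that
\begin{equation*}
\nabla^{k}\vec{A} = D^{k+2}u + \sum Q_{\vec{j}}(Du, g^{-1}) \, D^{j_{1}+1}u * \cdots * D^{j_{s}+1}u,
\end{equation*}
where each multi-index $\vec{j}=(j_{1},\ldots,j_{s})$ satisfies (\ref{E6.22}) and $Q_{\vec{j}}$ is a polynomial in the components of $Du$ and $g^{-1}$. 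Differentiating this expression by $D$ produces either (a) one extra Euclidean derivative landing on a $D^{j+1}u$ factor, raising it to $D^{j+2}u$ (and potentially to the top term $D^{k+3}u$, which becomes the principal piece on the next level), or (b) one derivative landing on $Du$ or $g^{-1}$ inside $Q_{\vec{j}}$, which by the two rules above only introduces fresh $D^2 u$ and $Du$ factors with extra $g^{-1}$'s. Adding the $\Gamma * \nabla^{k}\vec{A}$ correction contributes a further $D^2 u * Du * g^{-1}$ factor. A direct degree count then shows that the new $\vec{j}$'s satisfy the relations (\ref{E6.22}) at level $k+1$, and the coefficients assemble into a new polynomial $P_{k+1}(|Du|)$ after using $|g^{-1}|\le 1$ on the $Q_{\vec{j}}$'s.

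To pass from the identity to the two inequalities, I use the same eigenvalue bounds as in the proof of Lemma~\ref{lem6.2}. Since $\nabla^{k}\vec{A}$ carries $k+2$ tangent indices and one normal index, its squared $g$-norm contracts with $k+3$ factors of inverse metrics, so
\begin{equation*}
(1+|Du|^{2})^{-(k+3)}|\nabla^{k}\vec{A}|^{2} \le |\nabla^{k}\vec{A}|_{g}^{2} \le |\nabla^{k}\vec{A}|^{2},
\end{equation*}
where $|\nabla^{k}\vec{A}|$ denotes the Euclidean norm of the coordinate components. Combining this with the identity above, isolating $D^{k+2}u$ and transferring the remaining terms, yields both (\ref{E6.20}) and (\ref{E6.21}) with the prescribed factor $(1+|Du|^{2})^{(k+3)/2}$.

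The routine but main bookkeeping obstacle is verifying that the new list of multi-indices produced by Leibniz differentiation of a product of $D^{j_{i}+1}u$'s and of the $Q_{\vec{j}}$ coefficients still obeys the three constraints in (\ref{E6.22}): monotonicity (trivial after reordering), the cap $j_{i}\le k+1$ (each derivative raises some $j_{i}$ by one, but the previously top index was $\le k$, so after differentiation it is $\le k+1$), and the total-degree identity $\sum (j_{i}+1)$ matching $k+2$ on the next level (this follows because each Leibniz application adds exactly one derivative to the monomial). Once this combinatorial check is done, the polynomial $P_{k+1}(|Du|)$ is obtained by collecting the $|Du|$ factors from all $\Gamma$'s and the $(1+|Du|^{2})$ factors arising from $|g^{-1}|\le 1$, completing the induction.
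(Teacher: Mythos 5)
Your proposal is correct and follows essentially the same route as the paper: induction via the recursion $\nabla^{k}\vec{A}=D\nabla^{k-1}\vec{A}+\Gamma*\nabla^{k-1}\vec{A}$, the structural identity $\nabla^{k}\vec{A}=D^{k+2}u+\tilde{P}_k(g^{-1},Du,D^2u,\dots,D^{k+1}u)$ with multi-indices obeying (\ref{E6.22}), and the eigenvalue bound giving the $(1+|Du|^2)^{(k+3)/2}$ factor from the $k+3$ inverse-metric contractions. The combinatorial bookkeeping you carry out is exactly what the paper leaves as "verify by induction."
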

\begin{proof}
We prove the lemma by induction. The case $k=0$ and $k=1$ has already been proved in Lemma \ref{lem6.1} and Lemma \ref{lem6.2} respectively. For $k\ge 2$, first note that
\begin{equation*}\label{E6.23}
|\nabla^k\textbf{A}|_g^2=g^{\alpha\beta}g^{p_1q_1}g^{p_2q_2}g^{i_1j_1}\cdots g^{i_kj_k}
h_{\alpha p_1p_2,i_1\cdots i_k}h_{\beta q_1q_2,j_1\cdots j_k}.
\end{equation*}
From (\ref{E6.10}), we see that
\begin{equation}\label{E6.24}
|\nabla^k\textbf{A}|_g\leq |\nabla^k\textbf{A}| \leq (1+|Du|^2)^{\frac{k+3}{2}}|\nabla^k\textbf{A}|_g.
\end{equation}
Since
\begin{equation}\label{E6.25}
  \n^k\textbf{A}=D\n^{k-1}\textbf{A} + \Ga*\n^{k-1}\textbf{A}.
\end{equation}
Using (\ref{E6.9}) and (\ref{E6.25}), we can verify by induction that
\begin{equation*}
\n^k\textbf{A}=D^{k+2}u+\tilde{P}_k(g^{-1}, Du, D^2u, \cdots, D^{k+1}u).
\end{equation*}
where $\tilde{P}_k$ are multiple linear form given by
$$\tilde{P}_k=\underbrace{g^{-1}*\cdots g^{-1}}_k*\underbrace{Du*\cdots Du}_k*\sum D^{j_1+1}u*\cdots* D^{j_s+1}u$$
with the summation taken over indices satisfying (\ref{E6.22}).
Therefore, there exists a polynomial $P_k$ depending only on $k$, such that
\begin{equation*}
|\tilde{P}_k|\leq P_k(|Du|)\sum |D^{j_1+1}u|\cdots|D^{j_s+1}u|.
\end{equation*}
Now (\ref{E6.20}) and (\ref{E6.21}) follows from (\ref{E6.24}).
\end{proof}

Next we suppose that the function $u$ is defined on a disk $D_r\subset \Real^n$ centered at the origin with radius $r>0$. By (\ref{E6.7}), we may identify
$\textbf{A}$ with $D^2u$. For any non-negative integer $k$ and positive number $p$, there is a usual Sobolev norm of the Hessian $D^2u$ given by
\begin{equation*}
\norm{D^2u}_{W^{k,p}} = \left(\int_{D_r} \sum_{l=0}^k \abs{D^l D^2u}^p dx\right)^{\frac{1}{p}}.
\end{equation*}
On the other hand, we can define a Sobolev-type norm of $\textbf{A}$ by
\begin{equation}\label{e:sobolev-norm}
\norm{\textbf{A}}_{H^{k,p}} = \left(\int_\Si \sum_{l=0}^k \abs{\n^l \textbf{A}}_g^p d\mu\right)^{\frac{1}{p}}.
 \end{equation}
In particular, we have $\norm{\textbf{A}}_{L^p}:=\norm{\textbf{A}}_{H^{0,p}}$.

The next lemma shows that if $|Du|$ is bounded, then the Sobolev norms of $\textbf{A}$ can be bounded by the usual Sobolev norms of the Hessian $D^2u$. The proof of the lemma follows closely
that of Lemma 2.2 in \cite{DW2}.

\begin{lemma}\label{compare-I}
Let $r>0, \al>0$ be positive numbers. Suppose $\Si$ is a smooth graph represented by $u:D_{r}\subset \Real^n\to\Real^m$ with $|Du|\leq\al$, then for any $k\ge 0$,
\begin{equation}\label{e.compare-I}
\norm{\textbf{A}}_{H^{k,2}}\leq C\sum_{s=1}^{k+1}\norm{D^2u}^s_{W^{k,2}},
\end{equation}
where the constant $C$ depends on $r$, $\al$ and $k$.
\end{lemma}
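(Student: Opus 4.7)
The plan is to apply Lemma~\ref{lem6.3} pointwise, then integrate and control the resulting nonlinear products by H\"older's inequality together with the two-dimensional Sobolev embedding. First I would observe that the hypothesis $|Du|\le\al$ gives three immediate simplifications: the polynomial $P_l(|Du|)$ in Lemma~\ref{lem6.3} is bounded by a constant depending only on $\al$ and $k$; the area form satisfies $d\mu = \sqrt{\det g}\, dx$ with $1 \le \sqrt{\det g} \le (1+\al^2)$, so integration against $d\mu$ and against $dx$ are comparable; and the eigenvalue estimate~(\ref{E6.10}) for $g^{ij}$ is uniform. These absorb every metric-dependent factor into a single constant $C=C(r,\al,k)$.

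Next, for each $0\le l\le k$ I would square the inequality of Lemma~\ref{lem6.3} and integrate:
\begin{equation*}
\int_\Si |\n^l \vec{A}|_g^2\, d\mu \;\le\; C\int_{D_r} |D^{l+2}u|^2\, dx + C \sum \int_{D_r} |D^{j_1+1}u|^2\cdots|D^{j_s+1}u|^2\, dx,
\end{equation*}
where the finite sum runs over tuples satisfying $\sum j_i=l+1$, $1\le j_i\le l$, and in particular $s\ge 2$. The first term is bounded directly by $\|D^2u\|_{W^{k,2}}^2$. For each product term I would set $v:=D^2u$ and rewrite $D^{j_i+1}u = D^{m_i}v$ with $m_i:=j_i-1\in\{0,1,\ldots,k-1\}$.

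The key step is estimating $\int_{D_r} \prod_{i=1}^s |D^{m_i}v|^2\, dx$. By H\"older's inequality with exponents $(s,s,\ldots,s)$,
\begin{equation*}
\int_{D_r} \prod_{i=1}^s |D^{m_i}v|^2\, dx \;\le\; \prod_{i=1}^s \|D^{m_i}v\|_{L^{2s}(D_r)}^2.
\end{equation*}
Now I invoke the two-dimensional Sobolev embedding $W^{1,2}(D_r)\hookrightarrow L^{2s}(D_r)$ (valid for every finite $s$, with constant depending on $s$ and $r$) applied to $D^{m_i}v$, giving $\|D^{m_i}v\|_{L^{2s}} \le C(s,r)\|D^{m_i}v\|_{W^{1,2}} \le C(s,r)\|v\|_{W^{m_i+1,2}} \le C(s,r)\|D^2u\|_{W^{k,2}}$, where the last inequality uses $m_i+1\le l \le k$. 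Multiplying these $s$ bounds gives $\int_{D_r}\prod|D^{m_i}v|^2\, dx \le C\,\|D^2u\|_{W^{k,2}}^{2s}$.

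Summing over the finite collection of index tuples, over $l=0,\ldots,k$ (so $s$ ranges through at most $\{2,\ldots,k+1\}$, and the linear term $s=1$ absorbs the $|D^{l+2}u|^2$ contribution), yields $\|\vec{A}\|_{H^{k,2}}^2 \le C\sum_{s=1}^{k+1}\|D^2u\|_{W^{k,2}}^{2s}$, and the inequality $(\sum x_i^2)^{1/2}\le \sum |x_i|$ converts this to~(\ref{e.compare-I}). The main delicate point is ensuring the Sobolev constants are uniform, which is where the two-dimensionality of the base is used crucially: in higher dimensions $W^{1,2}\not\hookrightarrow L^{2s}$ for large $s$, and one would have to interpolate more carefully through Gagliardo--Nirenberg. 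Here, however, the embedding into every $L^p$, $p<\infty$, is available on the fixed domain $D_r$, so the finite number of product terms produce only constants depending on $r$, $\al$ and $k$.
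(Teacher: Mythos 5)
Your argument is correct, and it deviates from the paper's proof in one genuine (if localized) way. Both proofs share the same skeleton: invoke Lemma~\ref{lem6.3}, integrate (using $|Du|\le\al$ to absorb the metric factors and make $d\mu$ and $dx$ comparable), and apply H\"older to each product $\prod_i|D^{j_i+1}u|$. The difference is in how each factor is then controlled. The paper picks unequal H\"older exponents $q_i$ satisfying the Gagliardo--Nirenberg scaling relation and interpolates, $\norm{D^{j_i-1}\sigma}_{L^{q_i}}\le C\norm{\sigma}_{W^{k,2}}^{a_i}\norm{\sigma}_{L^2}^{1-a_i}$, deferring the verification that admissible pairs $(q_i,a_i)$ exist to \cite{DW2}. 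You instead take all exponents equal to $2s$ and use the two-dimensional embedding $W^{1,2}(D_r)\hookrightarrow L^{2s}(D_r)$, which yields the same per-factor bound $C\norm{D^2u}_{W^{k,2}}$ with no exponent bookkeeping; your checks that $s\ge 2$ for every product term and that $m_i+1=j_i\le l\le k$ (so the needed derivatives stay within $W^{k,2}$) are exactly the points that make this work. Your route is more elementary and self-contained, at the cost of being tied to base dimension two, as you note; the interpolation route is dimension-robust, which is why the paper (following Ding--Wang) phrases it that way even though this lemma only concerns $D_r\subset\Real^2$. Since both arguments discard the $L^2$ factor anyway and land on the same bound $C\norm{D^2u}_{W^{k,2}}^{s}$ for an $s$-fold product, the two proofs are equally adequate for the stated inequality.
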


\begin{proof}
For convenience, set $\sigma:=D^2u$. Then the inequality (\ref{E6.20}) can be written as
\begin{equation*}
|\nabla^k\textbf{A}|_g\leq |D^{k}\si|+P_k(|Du|)\sum |D^{j_1-1}\si|\cdots  |D^{j_s-1}\si|,
\end{equation*}
where the summation is taken over all indices satisfying (\ref{E6.22}).
Since $|Du|\le \al$, we may integrate to get
\begin{equation}\label{equ1}
\norm{\n^k\textbf{A}}_{L^2}\leq \norm{D^{k}\si}_{L^2} + C\sum \Norm{|D^{j_1-1}\si|\cdots  |D^{j_s-1}\si|}_{L^2},
\end{equation}
where $C$ depends on $k$ and $\al$.
For the second term in the last inequality, we may apply H\"older's inequality to get
\begin{equation}\label{equ2}
 \Norm{|D^{j_1-1}\sigma|\cdots  |D^{j_s-1}\sigma|}_{L^{2}}\leq \norm{D^{j_1-1}\sigma}_{L^{q_1}}\cdots \norm{D^{j_s-1}\sigma}_{L^{q_s}},
\end{equation}
where the numbers $q_1, \cdots q_s$ satisfies
\begin{equation*}
\frac{1}{q_1}+\cdots+\frac{1}{q_s}=\frac12.
\end{equation*}

We claim that the numbers $q_i$ can be chosen such that there exists $\frac{j_i-1}{k}\le a_i\le 1$ satisfying the equality
\begin{equation}\label{E7.15}
\frac{1}{q_i}=\frac{j_i-1}{n}+a_i(\frac{1}{2}-\frac{k}{n})+(1-a_i)\frac12.
\end{equation}
If the claim is true, then we can apply the Gagliardo-Nirenberg interpolation inequality(see for example \cite{Friedman}, page 27, Theorem 10.1) to $\si$ to get
\begin{equation}\label{equ3}
\norm{D^{j_i-1}\sigma}_{L^{q_i}}\leq C\norm{\sigma}^{a_i}_{W^{k,2}}\norm{\sigma}^{1-a_i}_{L^2}\le C\norm{\sigma}_{W^{k,2}}
\end{equation}
where the constant $C$ depends on $r$, $k$, $j_i$ and $q_i$.
Combining (\ref{equ1}), (\ref{equ2}) and (\ref{equ3}), we see that (\ref{e.compare-I}) follows easily.

The proof of the above claim is a direct calculation and we refer to \cite{DW2}, page 1452.
\end{proof}

\subsection{Compactness results for surfaces}\label{s:compactness}

Now we restrict ourselves to the case of codimension two surfaces in $\Real^4$, i.e. the case $n=m=2$. We first show that in this case, the inverse of Lemma~\ref{compare-I} is also
correct. Namely, if in addition $|D^2u|$ is bounded, then the Sobolev norms of $\textbf{A}$ and $D^2 u$ are equivalent.

\begin{lemma}\label{compare-II}
Let $r>0, \al>0, \beta>0$ be positive numbers. Suppose $\Si$ is a smooth graph represented by $u:D_{r}\subset \Real^2\to\Real^m$ with $|Du|\leq\al$ and $|D^2u|\leq \beta$, then for any
$k\ge 0$,
\begin{equation}\label{e.compare-II}
\norm{D^2u}_{W^{k,2}}\leq C\sum_{s=1}^{k}\norm{\textbf{A}}^s_{H^{k,2}},
\end{equation}
where the constant $C$ depends on $r$, $\al$, $\beta$ and $k$.
\end{lemma}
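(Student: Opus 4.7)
The strategy is induction on $k$, mirroring Lemma~\ref{compare-I} but now exploiting inequality (\ref{E6.21}) instead of (\ref{E6.20}), and using the extra assumption $|D^2u|\le\beta$ in an essential way at the final integration step.

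The base case $k=0$ is immediate from Lemma~\ref{lem6.1}: since $|Du|\le\alpha$, the bound $|D^2u|^2\le(1+|Du|^2)^3|\vec{A}|_g^2$ gives $\|D^2u\|_{L^2}\le C\|\vec{A}\|_{L^2}$. For the inductive step, set $\sigma:=D^2u$ and write (\ref{E6.21}) as
\begin{equation*}
|D^{k+2}u|\le C\,|\nabla^k\vec{A}|_g + C\sum |D^{j_1-1}\sigma|\cdots|D^{j_s-1}\sigma|,
\end{equation*}
where the sum runs over indices satisfying (\ref{E6.22}). Observe that the constraint $j_i\le k$ together with $\sum j_i=k+1$ forces $s\ge 2$, so no factor in the product is of top order $k$; each factor $D^{j_i-1}\sigma$ has derivative order at most $k-1$.

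Take $L^2$ on $D_r$ and apply H\"older with exponents $q_1,\ldots,q_s$ satisfying $\sum 1/q_i=1/2$. I then apply the Gagliardo--Nirenberg interpolation inequality in $\mathbb R^2$ to each factor, with top-order term $D^{k-1}\sigma$ and $L^\infty$ as the low-order anchor:
\begin{equation*}
\|D^{j_i-1}\sigma\|_{L^{q_i}}\le C\,\|\sigma\|_{W^{k-1,2}}^{a_i}\,\|\sigma\|_{L^\infty}^{1-a_i} + C\,\|\sigma\|_{L^\infty},
\end{equation*}
with $a_i\in[(j_i-1)/(k-1),1]$ and $\frac{1}{q_i}=\frac{j_i-1}{2}+a_i\frac{2-k}{2}$ (the $k=2$ case being treated separately with the same method). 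Summing the scaling identity over $i$ together with $\sum(j_i-1)=k+1-s$ and $\sum 1/q_i=1/2$ fixes the total exponent in a way that makes the product bound to a power of $\|\sigma\|_{W^{k-1,2}}$, while the hypothesis $|\sigma|\le\beta$ makes $\|\sigma\|_{L^\infty}$ contribute only as a constant depending on $\beta$. Thus each product is bounded by a finite sum of terms of the form $C\|\sigma\|_{W^{k-1,2}}^{t}$ with $1\le t\le s$.

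By the inductive hypothesis, $\|\sigma\|_{W^{k-1,2}}\le C\sum_{r=1}^{k-1}\|\vec{A}\|_{H^{k-1,2}}^r\le C\sum_{r=1}^{k-1}\|\vec{A}\|_{H^{k,2}}^r$, so each product contributes a bound of the form $C\sum_{s'=1}^{k}\|\vec{A}\|_{H^{k,2}}^{s'}$. Combined with the direct bound $\|\nabla^k\vec{A}\|_{L^2}\le\|\vec{A}\|_{H^{k,2}}$ for the first term, and with intermediate norms $\|D^l\sigma\|_{L^2}$ for $l<k$ handled by the inductive hypothesis, summing gives (\ref{e.compare-II}). The main obstacle is the bookkeeping in the Gagliardo--Nirenberg step: one must verify that the exponents $a_i$ lie in their admissible range and that, after H\"older, the sum of $a_i$'s yields a total power of $\|\sigma\|_{W^{k-1,2}}$ bounded by $s\le k$, so the final power count $s=1,\ldots,k$ in (\ref{e.compare-II}) is exactly what comes out. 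The $L^\infty$ bound $|\sigma|\le\beta$ is precisely what absorbs the fractional powers $1-a_i$ into the constant, which is why this direction requires the extra hypothesis on $|D^2u|$ that Lemma~\ref{compare-I} did not need.
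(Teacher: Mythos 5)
Your overall strategy --- induction on $k$, inequality (\ref{E6.21}), H\"older, Gagliardo--Nirenberg, with the hypothesis $|D^2u|\le\beta$ absorbing the low-order factor --- is the same as the paper's, but the specific interpolation scheme you commit to does not close. You require \emph{every} factor $D^{j_i-1}\sigma$ to be interpolated between $D^{k-1}\sigma$ in $L^2$ and $\sigma$ in $L^\infty$, with $\frac{1}{q_i}=\frac{j_i-1}{2}+a_i\frac{2-k}{2}$, $a_i\ge\frac{j_i-1}{k-1}$ and $\sum_i\frac{1}{q_i}=\frac12$. Summing the scaling identities forces $\sum_i a_i=\frac{k-s}{k-2}$ (for $k\ge 3$), while the admissibility thresholds sum to $\sum_i\frac{j_i-1}{k-1}=\frac{k+1-s}{k-1}$, and one checks that $\frac{k-s}{k-2}<\frac{k+1-s}{k-1}$ whenever $s\ge3$. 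So for every partition with three or more factors there is no admissible choice of exponents. Concretely, for $k=3$ and the partition $(2,1,1)$, i.e.\ the product $|D\sigma|\,|\sigma|^2$, your constraints force $\sum_i a_i=0$ while GN demands $a_1\ge\frac12$. The estimate itself is still true ($\Vert\,|D\sigma|\,|\sigma|^2\Vert_{L^2}\le\beta^2\Vert D\sigma\Vert_{L^2}\le\beta^2\Vert\sigma\Vert_{W^{2,2}}$), but not by the uniform recipe you describe; partitions such as $(2,2,1)$ at $k=4$ escape your scheme even if you exempt the zeroth-order factors from GN and put them directly in $L^\infty$, since $\Vert D\sigma\Vert_{L^4}$ interpolated against $D^{3}\sigma$ in $L^2$ and $\sigma$ in $L^\infty$ would need $a=\frac14<\frac13$.

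The paper avoids this by choosing different anchors. In the step from $k$ to $k+1$ it first isolates the extreme partition $j_1=k+1$, bounded directly by $\Vert\sigma\Vert_{L^\infty}\Vert D^k\sigma\Vert_{L^2}$ and the induction hypothesis; for the remaining partitions every factor has order at most $k-1$ and is interpolated between $\Vert\sigma\Vert_{W^{k,2}}$ and $\Vert\sigma\Vert_{L^2}$. With these anchors the forced total is $\sum_i a_i=1+\frac1k$ and each $a_i$ may be chosen in $[\frac{j_i-1}{k},\frac{j_i}{k}]$, which is always consistent; the hypothesis $|\sigma|\le\beta$ then enters through $\Vert\sigma\Vert_{L^2}\le\beta\sqrt{\pi}\,r$ rather than through an $L^\infty$ anchor. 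To repair your argument, either adopt that choice of anchors, or drop the insistence that every factor be genuinely interpolated against the top derivative $D^{k-1}\sigma$ (placing low-order factors in $L^\infty$ and using trivial embeddings, or interpolating each factor against the appropriate intermediate derivative).
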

\begin{proof}
We will prove (\ref{e.compare-II}) by induction. As before,  we set $\sigma:=D^2u$.

The case $k=0$ follows directly from (\ref{E6.8}) and (\ref{E6.11}), i.e.,
\begin{equation}\label{equ4}
\norm{\si}_{L^{2}}\leq C_0\norm{\textbf{A}}_{L^{2}}.
\end{equation}
By our assumption and Lemma \ref{lem6.2}, we have
\begin{equation*}
|D\si|\leq C(|\nabla \textbf{A}|_g+|\textbf{A}|_g).
\end{equation*}
Thus the case $k=1$ also holds true.

Now assume by induction that (\ref{e.compare-II}) holds for any $k\geq 1$. To prove the lemma, it suffices to estimate $\norm{D^{k+1}\si}_{L^2}$ in terms of $\norm{\textbf{A}}_{H^{k+1,2}}$.

Recall that by (\ref{E6.21}), we have
\begin{equation}\label{equ11}
    \norm{D^{k+1}\si}_{L^2}\leq C\norm{\nabla^{k+1}\textbf{A}}_{L^2}+C\sum \Norm{|D^{j_1-1}\si|\cdots  |D^{j_s-1}\si|}_{L^2},
\end{equation}
where the summations are taken over all indices $(j_1,\cdots,j_s)$ satisfying
\begin{equation}\label{equ5}
j_1\geq j_2\cdots \geq j_s, \ k+1\geq j_i \geq 1, \ j_1+j_2+\cdots j_s=k+2.
\end{equation}
To estimate the second term in the right hand side of (\ref{equ11}), we consider two cases:

\emph{Case 1: $j_1=k+1$}: In this case, by (\ref{equ5}), it obvious that $s=2$ and $j_2=1$. Then the term is simply bounded by
\begin{equation*}
 \Norm{|D^k\sigma|\cdot |\sigma|}_{L^{2}}\leq \norm{\si}_{L^\infty}\norm{D^{k}\sigma}_{L^{2}}\leq \be C\sum_{s=1}^{\gamma_k}\norm{\textbf{A}}^s_{H^{k,2}},
\end{equation*}
where the last inequality used the induction assumption.

\emph{Case 2: $j_1<k+1$}: In this case, by (\ref{equ5}), we have $1\leq j_i\leq k$ for any $1\leq i\leq s$. Applying H\"older's inequality, we get
\begin{equation}\label{equ6}
 \Norm{|D^{j_1-1}\sigma|\cdots  |D^{j_s-1}\sigma|}_{L^{2}}\leq \norm{D^{j_1-1}\sigma}_{L^{q_1}}\cdots \norm{D^{j_s-1}\sigma}_{L^{q_s}},
\end{equation}
where the numbers $q_1, \cdots q_s\in[2,\infty]$ satisfies
\begin{equation}\label{equ7}
\frac{1}{q_1}+\cdots+\frac{1}{q_s}=\frac12.
\end{equation}

Then we can find number $a_i$ decided by the following equality
\begin{equation}\label{equ9}
\frac{1}{q_i}=\frac{j_i-1}{2}+a_i(\frac{1}{2}-\frac{k}{2})+(1-a_i)\frac12=\frac{j_i-ka_i}{2}.
\end{equation}
Since $2\le q_i\le +\infty$, one can easily verify that $\frac{j_i-1}{k}\le a_i\le 1$. Therefore, we may apply the Gagliardo-Nirenberg interpolation inequality to get
\begin{equation}\label{equ10}
\norm{D^{j_i-1}\sigma}_{L^{q_i}}\leq C\norm{\sigma}^{a_i}_{W^{k,2}}\norm{\sigma}^{1-a_i}_{L^2}
\end{equation}
where the constant $C$ depends on $r$, $k$, $j_i$ and $q_i$. Putting (\ref{equ10}) into (\ref{equ6}) yields
\begin{equation*}
 \Norm{|D^{j_1-1}\sigma|\cdots  |D^{j_s-1}\sigma|}_{L^{2}}\leq C\norm{\sigma}^{\sum_{i}a_i}_{W^{k,2}}\norm{\sigma}^{\sum_{i}(1-a_i)}_{L^2}.
 \end{equation*}
By (\ref{equ7}) and (\ref{equ9}), it is easy to see that
\begin{equation*}
\sum_i a_i=1+\frac{1}{k}, \ \ \ \sum_{i}(1-a_i)=s-1-\frac{1}{k}.
\end{equation*}
Since $2\leq s\leq k+2$, it follows
\begin{equation}\label{equ12}
 \sum\Norm{|D^{j_1-1}\sigma|\cdots  |D^{j_s-1}\sigma|}_{L^{2}}\leq C\norm{\sigma}^{1+\frac{1}{k}}_{W^{k,2}}\norm{\sigma}^{s-1-\frac{1}{k}}_{L^2}.
 \end{equation}
Combining (\ref{equ11}) and (\ref{equ12}), and noting that
\begin{equation*}
\norm{\sigma}_{L^2}\leq\norm{\sigma}_{L^\infty}|D_r|^{\frac12}\le  \beta \sqrt{\pi}r,
\end{equation*}
we get
\begin{equation*}
    \norm{D^{k+1}\si}_{L^2}\leq C\norm{\nabla^{k+1}\textbf{A}}_{L^2}+C\norm{\sigma}^{1+\frac{1}{k}}_{W^{k,2}}.
\end{equation*}
Using the induction assumption, we conclude that (\ref{e.compare-II}) holds for $k+1$ and the lemma follows.
\end{proof}

Then following Langer~\cite{Langer}, we can prove the following compactness theorem.
\begin{theorem}\label{t:compactness1}
Given a compact two dimensional surface $\Si$, an integer $k\ge 1$ and constants $\be, \mathcal{A},\mathcal{V} > 0$, let $\mathcal{M}$ be the set of immersions $F:\Si\to \Real^4$
satisfying $\norm{\textbf{A}}_{L^\infty}\le \be$, $\norm{\textbf{A}}_{H^{k,2}}\le \mathcal{A}$, $\text{Vol}(\Si)\le \mathcal{V}$ and $0\in F(\Si)$. Then for any sequence $F_i$ in $\mathcal{M}$,
there exists a sequence of diffeomorphisms $\phi_i$ on $\Si$, such that $F_i\circ \phi_i$ sub-converges in $W^{k+2,2}$ weakly and $C^{k,\al}$ strongly to an immersion $F_\infty\in
\mathcal{M}$, where $0< \al<1$.
\end{theorem}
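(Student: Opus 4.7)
The plan is to reduce the problem to local graphical convergence, following the classical strategy of Langer~\cite{Langer} and Breuning~\cite{Breu}. The hypotheses split naturally: the $L^\infty$ bound on $\vec{A}$ controls the local geometry at a uniform scale, Lemma~\ref{compare-II} upgrades this to $W^{k+2,2}$ control on local graph functions, and the volume bound together with the anchoring $0\in F(\Si)$ confines the images to a compact region of $\Real^4$.

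First I would establish a uniform graph radius. From $\norm{\vec{A}}_{L^\infty}\le \be$ a standard ODE argument on the tangent plane rotation along curves gives $r_0=r_0(\be)>0$ such that for every $F\in\mathcal{M}$ and every $p\in F(\Si)$, the connected component of $F(\Si)\cap B_{r_0}(p)$ through $p$ is the graph over $T_pF(\Si)\cap D_{r_0}$ of a smooth function $u$ with $u(0)=0$, $Du(0)=0$, $\abs{Du}\le 1$ and $\abs{D^2 u}\le C\be$. Lemma~\ref{compare-II} then yields
\begin{equation*}
\norm{D^2 u}_{W^{k,2}(D_{r_0})}\le C(\be,k)\sum_{s=1}^{k}\norm{\vec{A}}_{H^{k,2}}^{s}\le C(\be,\mathcal{A},k),
\end{equation*}
so that, combined with the pointwise control on $u$ and $Du$, each local graph function has uniformly bounded $W^{k+2,2}$ norm. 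Since $n=2$, Rellich--Kondrachov and Morrey provide a compact embedding $W^{k+2,2}(D_{r_0})\hookrightarrow C^{k,\al}(D_{r_0})$ for every $0<\al<1$, so on each patch one can extract a subsequential limit, strongly in $C^{k,\al}$ and weakly in $W^{k+2,2}$.

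Next I would globalise using the volume bound. A graph over $D_{r_0/2}$ with $\abs{Du}\le 1$ carries area at least $c(\be)r_0^2>0$, so a greedy Vitali-type selection of centres, started from $0\in F_i(\Si)$, covers $F_i(\Si)$ by at most $N=N(\be,\mathcal{V})$ graph patches whose centres stay in a fixed compact subset of $\Real^4$ (by connectedness of the cover). After passing to a subsequence, the number of patches stabilises, the centres and tangent planes converge, and the graph functions converge in the senses above; the limiting patches assemble into an immersed surface $\Si_\infty\subset\Real^4$ containing $0$. Lower semicontinuity of the $H^{k,2}$ norm, pointwise convergence of $\vec{A}$, and Fatou's lemma applied to the area density then show that the limit satisfies all defining bounds of $\mathcal{M}$.

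The last step, which is the main technical obstacle, is to realise this convergence through diffeomorphisms $\phi_i\colon\Si\to\Si$ on the fixed parameter manifold. The strategy is to identify the abstract limit $\Si_\infty$ with $\Si$ via the inverse function theorem applied patch by patch: once $F_i$ is $C^{k,\al}$-close to the limit, each of its graph patches determines, through the transition between the $F_i$-parametrisation and the limiting parametrisation, a local diffeomorphism of $\Si$, and a partition of unity subordinate to the patch cover glues these local diffeomorphisms into a single $\phi_i$. The delicate point is to make this gluing smooth and uniformly controlled across overlapping patches, and to verify that the resulting map is actually a diffeomorphism for $i$ large rather than merely a local immersion; the required bookkeeping is carried out in~\cite{Langer, Breu}. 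Once the $\phi_i$ are in hand, strong $C^{k,\al}$ and weak $W^{k+2,2}$ convergence of $F_i\circ\phi_i$ to $F_\infty\in\mathcal{M}$ follows immediately from the patchwise convergence established above.
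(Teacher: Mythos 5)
Your proposal follows essentially the same route as the paper: both reduce to local graph representations at a uniform scale $(r,\al)$ guaranteed by Langer's theorem from the $L^\infty$ bound on $\vec{A}$, upgrade to uniform $W^{k+2,2}$ bounds on the graph functions via Lemma~\ref{compare-II}, extract patchwise limits by compact Sobolev embedding, and defer the patching/diffeomorphism construction to Langer and Breuning. The extra details you supply (the Vitali-type counting of patches and the verification that $F_\infty\in\mathcal{M}$) are consistent with, and slightly more explicit than, the paper's sketch.
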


There is also a localized version of the above theorem which is useful in blow-up analysis.

\begin{theorem}\label{t:compactness2}
Given a compact two dimensional surface $\Si$, an integer $k\ge 1$ and constants $\be, \mathcal{A}, \mathcal{V}(R)> 0$ where $\mathcal{V}(R)$ depends on $R$, let $F_i:\Si\to \Real^4$ be a
sequence of immersions satisfying $\norm{\textbf{A}(F_i)}_{L^\infty}\le \be$, $\norm{\textbf{A}(F_i)}_{H^{k,2}}\le \mathcal{A}$, $0\in F_i(\Si)$ and
\[\text{Vol}(\Si_i(R))\le \mathcal{V}(R)\]
where $\Si_i(R) = \Si_i\cap B(R)$ is the portion of the immersed surface $\Si_i:=F_i(\Si)$ bounded in the Euclidean ball of radius $R$. Then there exists a surface $\tilde{\Si}$ without
boundary, an immersion $F_\infty:\tilde{\Si}\to \Real^4$ and  a sequence of diffeomorphisms $\phi_i$, such that $F_i\circ \phi_i$ sub-converges to $F$ on any compact subset of
$\tilde{\Si}$ in $W^{k+2,2}$ weakly and $C^{k,\al}$ strongly, where $0< \al<1$. Here $\phi_i: U_i\to F_i^{-1}(\Si_i(R))$ are defined on open sets $U_i\subset \tilde{\Si}$ where $U_i\subset
\subset U_{i+1}$ and $\tilde{\Si}=\cup_{i=1}^{\infty}U_i$.
\end{theorem}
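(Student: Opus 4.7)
The plan is to localize the proof of the global compactness Theorem~\ref{t:compactness1} by a standard exhaustion argument, using the graphical representation results of Section~\ref{s:interpolation} together with the pointwise bound $\norm{\vec{A}}_{L^\infty}\le \be$ to produce graphical neighborhoods of uniform size.

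First, I would exploit the uniform $L^\infty$ bound on the second fundamental form. Around any point $p\in \Si_i$, a standard argument (essentially the inverse function theorem applied to the exponential-type map from the tangent plane) shows that there exist constants $r_0=r_0(\be)>0$ and $\al=\al(\be)>0$, independent of $i$, such that a neighborhood of $p$ in $\Si_i$ is represented as the graph of a smooth map $u_{i,p}:D_{r_0}\subset T_p\Si_i\to T_p\Si_i^\bot\cong \Real^2$ with $u_{i,p}(0)=0$, $Du_{i,p}(0)=0$, and $|Du_{i,p}|\le\al$ on $D_{r_0}$. Combined with $\norm{\vec{A}}_{L^\infty}\le\be$ and Lemma~\ref{lem6.1}, this also gives a uniform bound on $|D^2u_{i,p}|$.

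Next, I would use the uniform volume bound $\text{vol}(\Si_i(R))\le \mathcal{V}(R)$ together with the graphical representation to show that, for each fixed $R>0$, only a uniformly bounded number of graphical pieces of radius $r_0$ are needed to cover the portion of $\Si_i$ sitting inside $B(R)\subset\Real^4$. Starting from the origin (which lies on each $\Si_i$ by assumption), I cover $\Si_i\cap B(R)$ by finitely many such graphs $\{u_{i,p_\al}\}$, with uniformly bounded overlap. On each chart the bound on $|Du_{i,p_\al}|$ together with $\norm{\vec{A}(F_i)}_{H^{k,2}}\le\mathcal{A}$ and Lemma~\ref{compare-II} yields a uniform $W^{k+2,2}$ bound on $u_{i,p_\al}$. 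By Rellich--Kondrachov and Sobolev embedding, a subsequence converges weakly in $W^{k+2,2}$ and strongly in $C^{k,\al}$ on each chart $D_{r_0}$; by a diagonal argument this can be arranged on all charts at once.

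The main technical obstacle is assembling the local limits into a single global limit manifold $\tilde\Si$ together with immersions $F_\infty$ and reparametrizing diffeomorphisms $\phi_i$. To do this I would build an abstract limit surface by gluing the limiting graphical charts via their transition maps, which converge in $C^{k,\al}$ because the individual charts do. This produces the surface $\tilde\Si$ without boundary and open sets $U_i\subset\tilde\Si$, each identified (via the pre-limit charts) with an open subset of $F_i^{-1}(\Si_i(R))$; these identifications furnish the diffeomorphisms $\phi_i:U_i\to F_i^{-1}(\Si_i(R))$. Exhausting $R\to\infty$ along a diagonal subsequence gives nested $U_i\subset\subset U_{i+1}$ with $\tilde\Si=\cup_i U_i$ and an immersion $F_\infty:\tilde\Si\to\Real^4$. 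The convergence $F_i\circ\phi_i\to F_\infty$ in $W^{k+2,2}_{\mathrm{loc}}$ weakly and $C^{k,\al}_{\mathrm{loc}}$ strongly then follows from the chart-wise convergence established above. The delicate point is the bookkeeping required to check that the transition functions are compatible in the limit, so that the glued object is genuinely a manifold and the immersion property is preserved; this is exactly the issue treated in Langer's original argument \cite{Langer} and its extensions \cite{Breu}, and the pointwise bound $\norm{\vec{A}}_{L^\infty}\le\be$ is precisely what makes it go through.
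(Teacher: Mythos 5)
Your proposal follows essentially the same route as the paper: uniform graphical representation over disks of fixed radius via the $L^\infty$ bound on $\vec{A}$ (Langer's $(r,\al)$-immersion result), uniform $W^{k+2,2}$ bounds on the graph functions from Lemma~\ref{compare-II}, chart-wise weak/strong convergence plus a diagonal exhaustion in $R$, and deferral of the gluing/patching details to Langer and Breuning. The paper itself only sketches this argument and refers to \cite{Breu} for the construction of the limit surface, so your write-up is, if anything, slightly more explicit about the role of the local volume bound and the $R\to\infty$ diagonalization.
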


For simplicity, we say that $F_i$ converges to $F_\infty$ weakly in $W^{k+2,2}$-topology and strongly in $C^{k,\al}$-topology to $F_\infty$ in Theorem~\ref{t:compactness1}, and $F_i$
converges locally to $F_\infty$ in the same topology in Theorem~\ref{t:compactness2}, respectively.

To prove the above theorems, we follow the idea of Langer~\cite{Langer} and give an outline. Since the second fundamental forms has uniform upper bounds, there exists a uniform pair of number $r>0$ and $\al>0$, such that each $\Si_i$ is a $(r, \al)$-immersion. Namely, for any point $y\in \Si_i$,
there is a neighborhood of $y$ which can be represented by a graph $u:D_r\to \Real^2$ such that $|Du|<\al$. Then the surfaces can be each represented by a graph system where the graphs is defined on the disc $D_r$. Moreover, the number of graphs in each system is finite since we have uniform bounds on the volume. On each disk $D_r$, we can apply Lemma~\ref{compare-II} to find that $u$ has
uniformly bounded $W^{k+2,2}$-norms in terms of $\norm{\textbf{A}}_{H^{k,2}}$. Therefore, the graphs converge weakly in $W^{k+2,2}(D_r)$ and strongly in $C^{k,\al}(D_r)$. It follows that the graph systems converges by passing to subsequences. Finally, we may construct a sequence of diffeomorphisms on the surface such that the immersions composed with the diffeomorphisms converges in the desired spaces. Here we omit the details and refer the readers to~\cite{Langer} and Breuning's paper~\cite{Breu}.

\subsection{Uniform estimate for second fundamental form}\label{s:uniform-estimate}

Recall that in~\cite{DW2}, Ding and Wang generalized the classical Gagliardo-Nirenberg interpolation inequality to sections of vector bundles. In particular, if we regard the second
fundamental form $\textbf{A}$ as a section of the bundle $T^*\Si\otimes T^*\Si\otimes N\Si$, then it follows from \cite{DW2} that an interpolation inequality holds for $\textbf{A}$. However, if
the metric of the underlying manifold is varying, which is the case of SMCF, the Sobolev constant will vary.

Here we use blow up techniques to establish a uniform embedding theorem for $\textbf{A}$, i.e. Theorem~\ref{t:main1}, which will play a crucial role in the proof of our main theorem~\ref{t:main}. The blow up techniques applied here is analogous to the one used in the study of Willmore flow, see for example~\cite{KS2,KL}.
From now on, we simply denote the induced volume of $\Sigma$ by $|\Sigma|:=\text{Vol}(\Si)$. First we quote the following version of Simon's inequality (see (1.3)
in~\cite{Simon1} or Lemma 4.1 in~\cite{KS2}).

\begin{lemma}\label{l:simon}
Suppose $F:\Si\to \Real^n$ is a compact immersed surface. Then for any $0<R<\infty$ and $\Si(R)=F(\Si)\cap B(R)$, one has
\begin{equation*}
  \frac{|\Si(R)|}{R^2}\le C\int_\Si |\textbf{H}|^2d\mu.
\end{equation*}
\end{lemma}

Now we can prove the key estimate (Theorem~\ref{t:main1}).

\begin{theorem}\label{t:sobolev}
Given positive numbers $B$ and $m$, there exists a constant $C(B,m)$ such that for any immersed compact surface $\Si^2\subset\Real^4$ satisfying
\begin{equation*}
\norm{\textbf{A}}_{H^{2,2}}\le B \text{~~and~~} |\Si|\ge m,
\end{equation*}
there holds
\begin{equation*}
\norm{\textbf{A}}_{C^0}\le C(B,m).
\end{equation*}
\end{theorem}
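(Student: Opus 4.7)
The plan is to argue by contradiction using a blow-up analysis, in the spirit of the proofs of uniform estimates for Willmore surfaces (cf. \cite{KS2, KL}). Suppose to the contrary that there is a sequence of (connected) compact immersed surfaces $F_i:\Si\to\Real^4$ with $\norm{\vec{A}_i}_{H^{2,2}}\le B$, $|\Si_i|\ge m$, but $\lambda_i:=\norm{\vec{A}_i}_{C^0}\to\infty$. Let $p_i\in\Si$ be a point at which $|\vec{A}_i|$ attains its maximum $\lambda_i$, and rescale $\tilde F_i(x):=\lambda_i(F_i(x)-F_i(p_i))$.

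On the rescaled surface $\tilde\Si_i$, the key point is the scaling behaviour of the intrinsic $H^{k,2}$-norms in dimension two. Since lengths scale by $\lambda_i$, the induced area element picks up a factor $\lambda_i^2$ while $|\tilde\n^k\tilde{\vec{A}}_i|^2$ picks up a factor $\lambda_i^{-2(k+1)}$, so $\norm{\tilde{\vec{A}}_i}_{L^2}=\norm{\vec{A}_i}_{L^2}\le B$ (scale invariance of the $L^2$-term) and $\norm{\tilde\n^k\tilde{\vec{A}}_i}_{L^2}^2=\lambda_i^{-2k}\norm{\n^k\vec{A}_i}_{L^2}^2\to 0$ for $k=1,2$. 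Also $|\tilde{\vec{A}}_i|\le 1$ with equality at the origin, and this uniform $L^\infty$-bound on the second fundamental form produces a uniform local area bound on each $\tilde\Si_i$ via the Langer $(r,\al)$-immersion property already used in Section~\ref{s:compactness}. All the hypotheses of Theorem~\ref{t:compactness2} (with $k=2$) are therefore in force.

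Passing to a subsequence, we obtain a limit immersion $F_\infty:\tilde\Si_\infty\to\Real^4$ with $\tilde F_i\circ\phi_i\to F_\infty$ locally in $W^{4,2}$-weak and $C^{2,\al}$-strong topology. The $C^{2,\al}$ convergence gives $|\vec{A}_\infty(0)|=1$ and $|\vec{A}_\infty|\le 1$ pointwise, while the weak convergence together with $\norm{\tilde\n\tilde{\vec{A}}_i}_{L^2}\to 0$ and lower semi-continuity yields $\n\vec{A}_\infty\equiv 0$. By metric compatibility, $|\vec{A}_\infty|$ is locally constant; on the connected component $\tilde\Si_\infty^0$ containing $0$ we get $|\vec{A}_\infty|\equiv 1$, and $L^2$ lower semi-continuity on compact subsets gives $|\tilde\Si_\infty^0|=\int_{\tilde\Si_\infty^0}|\vec{A}_\infty|^2\le B^2$.

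We now split into two cases. If $\tilde\Si_\infty^0$ is compact, then eventually $U_i\supset\tilde\Si_\infty^0$ and $\phi_i|_{\tilde\Si_\infty^0}$ is a diffeomorphism onto a subset of $\Si$ that is both compact (as image of a compact set) and open (as image of an open set under a local diffeomorphism); connectedness of $\Si$ forces this image to be all of $\Si$, so $|\tilde\Si_i|\to|\tilde\Si_\infty^0|<\infty$. This contradicts $|\tilde\Si_i|=\lambda_i^2|\Si_i|\ge\lambda_i^2 m\to\infty$. If instead $\tilde\Si_\infty^0$ is non-compact, then Langer's lemma applied to $F_\infty$ (since $|\vec{A}_\infty|\equiv 1$ is bounded) produces, around each point, a graphical neighborhood of uniform area at least $\pi r_0^2$; a separation/covering argument using unbounded sequences in $\tilde\Si_\infty^0$ then gives $|\tilde\Si_\infty^0|=\infty$, again contradicting $|\tilde\Si_\infty^0|\le B^2$. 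The main obstacle is the compact case of this dichotomy: one must use the topological structure of the diffeomorphisms $\phi_i$ from Theorem~\ref{t:compactness2}, combined with connectedness of $\Si$, to translate the finiteness of $|\tilde\Si_\infty^0|$ into a contradiction with the original lower volume bound; the non-compact case is comparatively routine once the rigidity $\n\vec{A}_\infty=0$ with $|\vec{A}_\infty|=1$ has been established.
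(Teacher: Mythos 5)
Your proposal is correct and follows essentially the same blow-up argument as the paper: rescale at a maximum point of $|\vec{A}|$, use the scale invariance of $\norm{\vec{A}}_{L^2}$ and the decay $\norm{\n^k\vec{A}}_{L^2}\to 0$ in dimension two, pass to a local limit via Theorem~\ref{t:compactness2}, and combine $\n\vec{A}_\infty=0$ (via Kato/metric compatibility) with $|\vec{A}_\infty(0)|=1$ to get $|\vec{A}_\infty|\equiv 1$, contradicting the $L^2$ bound. The only difference is the endgame: the paper asserts directly that the limit has infinite area because $r_k^{-2}|\Si_k|\to\infty$, while you insert the compact/non-compact dichotomy to rule out area escaping to infinity under the merely local convergence; this extra step addresses a point the paper passes over quickly, so it is a legitimate (slightly more careful) rendering of the same proof rather than a different one.
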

\begin{proof}
We argue by contradiction and apply a blowing-up technique following Section 4 of \cite{KS2}. Suppose the theorem is false. Then there exists a sequence of compact surfaces $\Si_k\subset
\Real^4$ with $\norm{\textbf{A}_k}_{W^{2,2}}\le B$ and $|\Si_k|\ge m$ such that $\lim_{k\to \infty}\norm{\textbf{A}_k}_{C^0}=+\infty$. We are going to show that this is impossible by using
blow-up analysis.

Since $\Si_k$ is compact, $\norm{\textbf{A}_k}_{C^0}$ is attained at some point $y_k\in\Si_k$ such that
\[|\textbf{A}_k(y_k)|=\max_{y\in \Si_k}|\textbf{A}_k(y)|.\]
Denote $r_k:=1/|\textbf{A}_k(y_k)|$. Then $\lim_{k\to \infty}r_k=0$.
Thus we can define a sequence of rescaled surfaces $\Si_k'=\frac{\Si_k-y_k}{r_k}$. Denote the corresponding second fundamental form by $\textbf{A}_k'$. Let $g_k$ and $g_k'$ be the induced
metric on $\Si_k$ and $\Si_k'$ respectively. By rescaling properties, we have
\begin{equation}\label{eq7}
|\textbf{A}_k'|_{g_k'}=r_k|\textbf{A}_k|_{g_k}\le r_k|\textbf{A}_k(y_k)|_{g_k}=1.
\end{equation}
Moreover,
\begin{equation}\label{eq1}
\norm{\textbf{A}_k'}_{L^2,g_k'}=\norm{\textbf{A}_k}_{L^2,g_k}
\end{equation}
and
\begin{equation}\label{eq2}
\norm{\n_{g_k'}\textbf{A}_k'}_{L^2,g_k'}=r_k\norm{\n_{g_k}\textbf{A}_k}_{L^2,g_k}.
\end{equation}

Then one can verify that the sequence of rescaled surfaces $\Si_k'$ satisfies all the requirements of Theorem~\ref{t:compactness2}. In particular, the local volume bound follows from Lemma~\ref{l:simon}. Consequently there exists a subsequence of the surfaces, which we still denote by $\Si_k'$, such that $\Si_k'$ converges locally to a complete surface $\Si_0$
weakly in $W^{4,2}$ and strongly in $C^{2,\al}$.

Let $\textbf{A}_0$ and $g_0$ denote the second fundamental form and induced metric of the limit surface $\Si_0$ respectively. Note that $\textbf{A}_0\in C^\al$ is continuous. It follows from (\ref{eq7}) that
\begin{equation}\label{eq3}
\norm{\textbf{A}_0}_{C^0,g_0} = \lim_{k\to \infty} r_k|\textbf{A}_k(y_k)|_{g_k} = 1.
\end{equation}
Moreover, by (\ref{eq1}) and (\ref{eq2}), we have
\begin{equation}\label{eq9}
\norm{\textbf{A}_0}_{L^2,g_0}=\lim_{k\to\infty}\norm{\textbf{A}_k}_{L^2,g_k}\le B
\end{equation}
and
\begin{equation}\label{eq6}
\norm{\n_{g_0}\textbf{A}_0}_{L^2,g_0}=\lim_{k\to\infty}r_k\norm{\n_{g_k}\textbf{A}_k}_{L^2,g_k}\le \lim_{k\to \infty}r_kB=0.
\end{equation}
Note that by Kato's inequality, we have
\[ |\n_{g_0}|\textbf{A}_0|_{g_0}|\le |\n_{g_0}\textbf{A}_0|_{g_0}, \]
which together with (\ref{eq6}) yields
\begin{equation*}
\int_{\Si_0}|\n_{g_0}|\textbf{A}_0|_{g_0}|^2 d\mu_{g_0}\le \norm{\n_{g_0}\textbf{A}_0}_{L^2,g_0}^2=0.
\end{equation*}
Thus we find $\n_{g_0}|\textbf{A}_0|_{g_0}=0$ a.e. on $\Si_0$, which implies that $|\textbf{A}_0|_{g_0}$ is constant since $\Si_0$ is connnected. It follows from (\ref{eq3}) that $|\textbf{A}_0|_{g_0}\equiv 1$.

Now, if $\Si_\infty$ is compact, then it has finite volume. Since in this case $\Si_\infty$ is the only component of the limit surface, this contradicts with the assumption of finite lower bound of the volume of $\Si_k$. Otherwise $\Si_\infty$
is complete and non-compact, but its mean curvature is bounded in view of $|\textbf{A}_0|_{g_0}\equiv 1$. Thus $\Si_\infty$ has infinite volume (see for example \cite{CL}), which contradicts with the finiteness of $\norm{\textbf{A}_0}_{L^2,g_0}$.
\end{proof}

\begin{remark}\label{r:1}
Obviously, using Theorem~\ref{t:sobolev}, we can replace the requirement of upper bound on $\norm{\textbf{A}}_{L^\infty}$ with a lower bound on the volume of the surfaces, then the
compactness results in Theorem~\ref{t:compactness1} and Theorem~\ref{t:compactness2} still holds.
\end{remark}

\section{Short Time Existence of SMCF}\label{s:app}

\subsection{The perturbed flow}

One goal in this section is to prove the local existence of two dimensional SMCF in Euclidean space $\Real^4$. But for the moment, let us consider a general approximating scheme for $n$ dimensional SMCF in $\Real^{n+2}$.

To obtain a local solution to the SMCF (\ref{e:SMCF1}), we will consider the perturbed SMCF (\ref{e:pSMCF})
\begin{equation}\label{app-flow}
\left\{\begin{aligned}
         &\frac{\partial F}{\partial t}=J\textbf{H}+\varepsilon\textbf{H}, \\
         &F(0,\cdot)=F_0,
    \end{aligned}\right.
\end{equation}
where $\varepsilon>0$ is a positive number. The idea is to solve the perturbed SMCF (\ref{e:pSMCF}) and approach the original SMCF (\ref{e:SMCF1}) by letting $\varepsilon$ go to zero.

Similar to the argument in Section~\ref{s:symbol}, it is easy to check that the system (\ref{app-flow}) is a degenerate parabolic system. In fact, if we set
$P_{\varepsilon}(F)=J\textbf{H}+\varepsilon\textbf{H}=J\Delta_{\Sigma}F+\varepsilon\Delta_{\Sigma}F$, then the principal symbol of $P_{\varepsilon}$ is
\begin{equation*}
\sigma(D(P_{\varepsilon}))(x,\xi)G=|\xi|^2(JG^{\perp}+\varepsilon G^{\perp}).
\end{equation*}
It follows that
\begin{equation*}
\<\sigma(D(P_{\varepsilon}))(x,\xi)G,G \>=\<|\xi|^2(JG^{\perp}+\varepsilon G^{\perp}),G\>=\varepsilon|\xi|^2 |G^{\perp}|^2.
\end{equation*}
This shows that for each fixed $\varepsilon>0$, the operator $P_{\varepsilon}$ is weakly elliptic. The degeneracy of the equation is caused by the
diffeomorphism group of the underlying manifold, just as in the case of MCF. It is well-know that by applying the DeTurck trick, one can prove the short time
existence of a solution to the MCF.  Here we follow the same trick to show the existence of a local solution of the perturbed SMCF (\ref{e:pSMCF}).

\begin{lemma}\label{prop3.2}
For each $\varepsilon>0$, the Cauchy problem (\ref{e:pSMCF}) admits a unique smooth solution on the time interval $[0,T_{\varepsilon})$ for some $T_{\varepsilon}>0$.
\end{lemma}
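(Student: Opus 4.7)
The plan is to use the DeTurck trick to reduce the degenerate perturbed SMCF to a strictly parabolic quasilinear system (in the Petrovsky sense) and then invoke standard local existence theory for such systems.

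First, I would compute the principal symbol of the right-hand side of (\ref{app-flow}) exactly as in Section~\ref{s:symbol}. Since $\mathbf{H} = \Delta_\Si F$ at the level of principal symbol, the same calculation leading to (\ref{E3.6}) shows that the principal symbol of the map $F \mapsto J\mathbf{H} + \varepsilon \mathbf{H}$ acts on a vector $G \in \Real^{n+2}$ by
\[
\sigma(x,\xi)G = |\xi|^2 (\varepsilon \, \mathrm{Id} + J)\, G^\perp,
\]
while annihilating the tangential part $G^T$. The vanishing on tangential directions is precisely the gauge degeneracy coming from the diffeomorphism group of $\Si$. On the normal bundle, the operator $\varepsilon\,\mathrm{Id} + J$ has eigenvalues $\varepsilon \pm \mathbf{i}$, whose real part equals $\varepsilon > 0$. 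Thus after breaking the gauge, the flow will be parabolic in the sense of Petrovsky.

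Next I would apply the DeTurck trick. Fix a background metric $\tilde g$ on $\Si$ (for instance $g_0 = F_0^* \bar g$) and define a time-dependent vector field $W(t) = \mathrm{tr}_g (\Gamma(g) - \Gamma(\tilde g))$ on $\Si$ built from the difference of Christoffel symbols, as in the standard DeTurck construction for the MCF. Consider the modified flow
\[
\frac{\partial \tilde F}{\partial t} = J\mathbf{H}(\tilde F) + \varepsilon \mathbf{H}(\tilde F) + d\tilde F (W(t)), \qquad \tilde F(0,\cdot) = F_0.
\]
The added term contributes a tangential piece whose principal symbol fills in the degenerate tangential directions by $|\xi|^2$ times the identity on the tangent bundle. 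The resulting linearized principal symbol is block-diagonal: $|\xi|^2 \,\mathrm{Id}$ on the tangent bundle and $|\xi|^2(\varepsilon\,\mathrm{Id} + J)$ on the normal bundle. The full eigenvalues $1$ and $\varepsilon \pm \mathbf{i}$ all have strictly positive real parts, so the modified system is a strongly Petrovsky-parabolic quasilinear system of second order.

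Having established parabolicity, local-in-time existence and uniqueness of a smooth solution $\tilde F \in C^\infty([0,T_\varepsilon)\times \Si)$ to the modified system follows from standard results on quasilinear parabolic systems (e.g.\ Eidel'man/Solonnikov theory, or a contraction mapping argument using Schauder estimates for the linearized system). To recover a solution of (\ref{app-flow}) itself, one solves the ODE on $\Si$ for the one-parameter family of diffeomorphisms $\varphi_t$ generated by $-W(t)$ with $\varphi_0 = \mathrm{id}$, and sets $F := \tilde F \circ \varphi_t$. A direct computation shows that $F$ satisfies (\ref{app-flow}) with $F(0,\cdot) = F_0$, and smoothness in $t$ and $x$ is preserved because the ODE is smooth. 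Uniqueness of $F$ follows from uniqueness of $\tilde F$ together with the uniqueness of the flow $\varphi_t$.

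The only non-routine point is checking that the DeTurck-modified symbol really is Petrovsky-parabolic despite the presence of the skew-symmetric part $J$; once the eigenvalue computation $\varepsilon \pm \mathbf{i}$ is in hand, everything reduces to invoking classical results. The proof breaks down at $\varepsilon = 0$, which is exactly why the genuine SMCF requires the much more delicate energy estimates and uniform bounds developed in the subsequent sections.
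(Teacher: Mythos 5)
Your proposal is correct and follows essentially the same route as the paper: compute the principal symbol of $P_\varepsilon = J\mathbf{H}+\varepsilon\mathbf{H}$, observe that $\<\sigma(D(P_{\varepsilon}))(x,\xi)G,G \>=\varepsilon|\xi|^2|G^{\perp}|^2$ so the operator is parabolic modulo tangential diffeomorphisms, and then apply the DeTurck trick together with standard quasilinear parabolic theory. The paper states this more briefly (citing \cite{Taylor}), while you spell out the eigenvalue computation $\varepsilon\pm\mathbf{i}$ and the reconstruction of $F$ from the gauge-fixed solution, but the argument is the same.
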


\begin{proof}
First, we fix a background symmetric connection $\bar{\Ga}$ on $\Si$ and consider a modified flow
\begin{equation}\label{e:DeTurck}
  \frac{\partial \tilde{F}}{\partial t}=\tilde{J}\tilde{\textbf{H}}+\varepsilon\tilde{\textbf{H}}+\varepsilon d\tilde{F}(V),
\end{equation}
where
\[ V= \tilde{g}^{ij}(\tilde{\Ga}_{ij}^k-\bar{\Ga}_{ij}^k)\frac{\p}{\p x^k}\]
is a vector field on $\Si$, and $\tilde{g}, \tilde{\Ga}$ are the metric and connection induced by $\tilde{F}$.

In local coordinates, we have
\[(\tilde{\textbf{H}}+ d\tilde{F}(V))^\al = \tilde{g}^{ij}\(\frac{\p^2 \tilde{F}^\al}{\p x^i\p x^j}-\bar{\Ga}_{ij}^k\frac{\p \tilde{F}^\al}{\p x^k}\).\]
If we denote the right hand side of (\ref{e:DeTurck}) by $\tilde{P}_\varepsilon(\tilde{F})$, then from the computations in Section \ref{s:symbol}, we see that the principal symbol of $\tilde{P}_\varepsilon$ is
\begin{equation*}
\sigma(D(\tilde{P}_{\varepsilon}))(x,\xi)G=|\xi|^2(JG^{\perp}+\varepsilon G).
\end{equation*}
It follows that
\begin{equation*}
\<\sigma(D(\tilde{P}_{\varepsilon}))(x,\xi)G,G \>=\varepsilon|\xi|^2 |G|^2.
\end{equation*}
Hence the modified flow (\ref{e:DeTurck}) is strictly parabolic. By standard parabolic theory (see for example chapter 15 of \cite{Taylor}), we know that for any $\ep>0$ and smooth initial data, (\ref{e:DeTurck}) admits a unique smooth local solution $\tilde{F}_\ep:\Si\times[0, T_\ep]\to \Real^{n+2}$.

Next we can solve the ODE
\[\frac{\p \phi}{\p t}=V\]
to get a family of diffeomorphisms $\phi$ of $\Si$ which is generated by the vector field $V$. It is easy to show that $F_\ep:=\tilde{F}_\ep\circ \phi^{-1}$ is a solution to the original flow (\ref{app-flow}).
\end{proof}

\subsection{Evolution equations}

In this subsection, we will calculate the evolution equation of various geometric quantities for the perturbed SMCF (\ref{e:pSMCF}). Since these calculations are standard as in the case of MCF,
we only provide sketches here. Note that Lemma~\ref{l:parallel} is crucial in the calculations since we can always commute the normal connection $\n$ and the complex structure $J$.

Choose a local field of orthonormal frames $e_1, \cdots, e_n, \nu_{n+1}, \nu_{n+2}$ of $\textbf{R}^{n+2}$ along $\Sigma_t$ such that $e_1,\cdots, e_n$ are tangent vectors of $\Sigma_s$ and
$\nu_{n+1},\nu_{n+2}$ are in the normal bundle over $\Sigma_t$. We will agree on the following index ranges:
$$1\leq i,j,k,l \leq n, \ \ n+1\leq \alpha,\beta,\gamma\leq n+2, \ \ 1\leq A,B,C \leq n+2.$$
We will work on a parallel normal frame $\{\nu_{n+1},\nu_{n+2}\}$ such that $\n_t\nu_\al=0$, i.e. $\<\p_t\nu_{\alpha},\nu_{\beta}\>=0$. Since the complex structure $J$ is also parallel by Lemma~\ref{l:parallel}, we may assume that $J\nu_{n+1}=\nu_{n+2}$, $J\nu_{n+2}=-\nu_{n+1}$ for all time $t$.

For simplicity, we set
\begin{equation*}
    J\textbf{H}+\varepsilon\textbf{H}:=\textbf{V}=V^{\alpha}e_{\alpha},
\end{equation*}
which means
\begin{equation}\label{V}
    V^{n+1}=\varepsilon H^{n+1}-H^{n+2}, \ \ V^{n+2}=\varepsilon H^{n+2}+H^{n+1}.
\end{equation}

Denoting $g=(g_{ij})$ the induced metric on $\Sigma$ and $d\mu$ the induced volume form of $g$, we have
\begin{lemma}\label{lem2.1}
Along the perturbed SMCF (\ref{e:pSMCF}), we have
\begin{equation}\label{metric}
    \frac{\partial}{\partial t} g_{ij}=-2\< J\textbf{H}, \textbf{A}(e_i,e_j)\>-2\varepsilon\< \textbf{H}, \textbf{A}(e_i,e_j)\>.
\end{equation}
As a consequence, we have
\begin{equation}\label{metric2}
    \frac{\partial}{\partial t} d\mu=-\varepsilon|\textbf{H}|^2d\mu.
\end{equation}
\end{lemma}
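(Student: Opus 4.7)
The plan is to mimic the computation already carried out in Section~\ref{s:pre} for the unperturbed SMCF, treating the extra term $\varepsilon\textbf{H}$ as an additional normal perturbation. The calculation is local, so I would fix a point $x\in\Si$ and differentiate $g_{ij} = \langle \p F/\p x_i, \p F/\p x_j\rangle$ in $t$. Since $\p F/\p t = J\textbf{H} + \varepsilon\textbf{H}$ is a section of the normal bundle and $\p F/\p x_j$ is tangential, the terms $\langle \p F/\p t, \p F/\p x_j\rangle$ vanish, so
\begin{equation*}
\frac{\p}{\p t}g_{ij} = \langle \overline{\n}_i\tfrac{\p F}{\p t}, \tfrac{\p F}{\p x_j}\rangle + \langle \tfrac{\p F}{\p x_i}, \overline{\n}_j\tfrac{\p F}{\p t}\rangle = -2\langle \tfrac{\p F}{\p t}, \overline{\n}_i\tfrac{\p F}{\p x_j}\rangle = -2\langle J\textbf{H}+\varepsilon\textbf{H}, \textbf{A}(e_i,e_j)\rangle,
\end{equation*}
where in the middle equality I differentiated $\langle\p F/\p t,\p F/\p x_j\rangle = 0$ in the $e_i$ direction, and in the last equality I used the definition of the second fundamental form as the normal part of $\overline{\n}_i \p_j F$. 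This gives the first identity.

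For the volume form I would use the standard identity $\frac{\p}{\p t}d\mu = \tfrac{1}{2}g^{ij}\frac{\p}{\p t}g_{ij}\,d\mu$. Substituting the first identity gives
\begin{equation*}
\frac{\p}{\p t}d\mu = -g^{ij}\langle J\textbf{H}+\varepsilon\textbf{H}, \textbf{A}(e_i,e_j)\rangle\,d\mu = -\langle J\textbf{H}+\varepsilon\textbf{H}, \textbf{H}\rangle\,d\mu.
\end{equation*}
The key cancellation then comes from the fact that $J$ is a fiberwise rotation by $\pi/2$ on the normal bundle, so $J\textbf{H}\perp\textbf{H}$ and thus $\langle J\textbf{H}, \textbf{H}\rangle=0$. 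Only the perturbation term survives, leaving $\frac{\p}{\p t}d\mu = -\varepsilon|\textbf{H}|^2\,d\mu$.

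There is no real obstacle here; the computation is essentially the one from the first lemma of Section~\ref{s:pre}, adapted by replacing $J\textbf{H}$ with $J\textbf{H}+\varepsilon\textbf{H}$ throughout. The only point worth flagging is that the skew-symmetry of $J$ (equivalently, the orthogonality of $J\textbf{H}$ and $\textbf{H}$) is what isolates the parabolic term $-\varepsilon|\textbf{H}|^2\,d\mu$ in the volume evolution; this is precisely what makes the perturbed flow dissipative in volume, in contrast to the volume-preserving SMCF.
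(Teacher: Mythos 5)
Your proof is correct and is essentially identical to the paper's: the same pointwise computation moving the $t$-derivative onto the spatial factors via $\langle \partial_t F, \partial_{x_j}F\rangle=0$, the same identity $\partial_t d\mu = \tfrac12 g^{ij}\partial_t g_{ij}\, d\mu$, and the same cancellation $\langle J\textbf{H},\textbf{H}\rangle=0$ from the skew-symmetry of $J$.
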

\begin{proof}
The lemma follows by exactly the same arguments as in the proof of Lemma~\ref{l:metric}.
\end{proof}

\begin{lemma}\label{lem2.2}
Along the perturbed SMCF (\ref{e:pSMCF}), the second fundamental form satisfies
\begin{equation}\label{e2.3}
\begin{aligned}
 \frac{\partial }{\partial t}h_{ij}^{n+1}
   =& \varepsilon\Delta h^{n+1}_{ij}-\Delta h^{n+2}_{ij}+\varepsilon h^{n+1}_{im}(h^{\beta}_{mk}h^{\beta}_{kj}-h^{\beta}_{mj}H^{\beta})\\
   & +\varepsilon h^{n+1}_{mk}(h^{\beta}_{mk}h^{\beta}_{ij}-h^{\beta}_{ki}h^{\beta}_{mj})
        +\varepsilon h^{\beta}_{ik}(h^{\beta}_{kl}h^{n+1}_{lj}-h^{n+1}_{kl}h^{\beta}_{lj})\\
   & -h^{n+2}_{im}(h^{\beta}_{mk}h^{\beta}_{kj}-h^{\beta}_{mj}H^{\beta})
       -h^{n+2}_{mk}(h^{\beta}_{mk}h^{\beta}_{ij}-h^{\beta}_{ki}h^{\beta}_{mj})    \\
   & -h^{\beta}_{ik}(h^{\beta}_{kl}h^{n+2}_{lj}-h^{n+2}_{kl}h^{\beta}_{lj})
       -V^{\beta}h_{ik}^{n+1}h_{jk}^{\beta},
\end{aligned}
\end{equation}
and
\begin{equation}\label{e2.4}
\begin{aligned}
 \frac{\partial }{\partial t}h_{ij}^{n+2}
   =& \varepsilon\Delta h^{n+2}_{ij}+\Delta h^{n+1}_{ij}+\varepsilon h^{n+2}_{im}(h^{\beta}_{mk}h^{\beta}_{kj}-h^{\beta}_{mj}H^{\beta}) \\
   & +\varepsilon h^{n+2}_{mk}(h^{\beta}_{mk}h^{\beta}_{ij}-h^{\beta}_{ki}h^{\beta}_{mj})
       +\varepsilon h^{\beta}_{ik}(h^{\beta}_{kl}h^{n+2}_{lj}-h^{n+2}_{kl}h^{\beta}_{lj}) \\
   & -h^{n+1}_{im}(h^{\beta}_{mk}h^{\beta}_{kj}-h^{\beta}_{mj}H^{\beta})
       -h^{n+1}_{mk}(h^{\beta}_{mk}h^{\beta}_{ij}-h^{\beta}_{ki}h^{\beta}_{mj})    \\
   & -h^{\beta}_{ik}(h^{\beta}_{kl}h^{n+1}_{lj}-h^{n+1}_{kl}h^{\beta}_{lj})
       -V^{\beta}h_{ik}^{n+2}h_{jk}^{\beta}.
\end{aligned}
\end{equation}
In particular, we have
\begin{equation}\label{e2.5}
    \frac{\partial }{\partial t}\textbf{A}=\varepsilon\Delta \textbf{A}+J\Delta \textbf{A}+\textbf{A}*\textbf{A}*\textbf{A}.
\end{equation}
\end{lemma}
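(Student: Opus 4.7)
The plan is to derive the evolution equations for $h_{ij}^\alpha$ by a direct, pointwise computation at a point $x\in\Si$, using a carefully chosen frame and the two structural facts already established: the temporal gauge $\n_t\nu_\al=0$ and the parallelism $\n J=0$ (Lemma~\ref{l:parallel}). At such a point I work in a normal frame of $(\Si,g(t))$ so that $\Ga_{ij}^k=0$ and $g^{ij}=\de^{ij}$ instantaneously; this is legitimate because the equations in the statement are tensorial.

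First I would compute $\p_t h^\al_{ij}$ starting from the identity $h^\al_{ij}=\<\bar{\n}_{e_i}e_j,\nu_\al\>$ (or, equivalently, from $\bar{\n}_i\p_j F=h_{ij}^\be\nu_\be$). Differentiating in $t$ and commuting $\p_t$ with $\p_i,\p_j$ on the immersion $F$, all the time derivatives land on $\p_t F=\vec{V}=V^\be\nu_\be$ with $V^\be$ given by~(\ref{V}). Using the Gauss/Weingarten relations to split $\bar{\n}_iV^\be\nu_\be$ into its tangential and normal parts, together with the temporal gauge $\n_t\nu_\al=0$, produces
\[
\p_t h^\al_{ij}=\n_i\n_j V^\al - V^\be\,h^\be_{ik}h^\al_{jk} + \text{terms involving }\<\n_t e_i,\cdot\>,
\]
where the last term will be absorbed after we identify $\n_t e_i$ (which is tangential in view of~(\ref{metric})). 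The first term is the key one, since $V^\al$ itself involves $H^\be=g^{kl}h^\be_{kl}$ and hence two derivatives of $F$.

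Next I would apply the Simons-type identity to rewrite $\n_i\n_j H^\be$. Using the Codazzi equation $\n_i h^\be_{jk}=\n_j h^\be_{ik}=\n_k h^\be_{ij}$ in $\Real^{n+2}$ (the ambient curvature terms vanish) and the Ricci identity for the normal bundle, one obtains
\[
\n_i\n_j H^\be=\De h^\be_{ij}+h^\be_{mk}h^\ga_{mk}h^\ga_{ij}-h^\be_{ki}h^\ga_{mj}h^\ga_{mk}-h^\be_{im}(h^\ga_{mk}h^\ga_{kj}-h^\ga_{mj}H^\ga)-h^\be_{ik}(h^\ga_{kl}h^\be_{lj}-h^\be_{kl}h^\ga_{lj}),
\]
up to the standard indexing. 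Plugging this into $\n_i\n_j V^\al=\ep\,\n_i\n_j H^\al + \n_i\n_j(JH)^\al$ and using $\n J=0$ to pull $J$ outside the derivatives (so that $\n_i\n_j(JH)^{n+1}=-\n_i\n_j H^{n+2}$ and $\n_i\n_j(JH)^{n+2}=\n_i\n_j H^{n+1}$) yields exactly the right combination of $\ep\De h^\al_{ij}$, $\pm\De h^{\bar\al}_{ij}$, and the cubic curvature terms displayed in~(\ref{e2.3}) and~(\ref{e2.4}). Finally the $-V^\be h^\al_{ik}h^\be_{jk}$ term is picked up from the Weingarten contribution.

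The passage to the schematic formula~(\ref{e2.5}) is then immediate: group the $\ep$-terms as $\ep\De\vec{A}$, the $J$-terms as $J\De\vec{A}$, and collect all remaining cubic terms into the symbolic $\vec{A}*\vec{A}*\vec{A}$. I expect the only real obstacle to be bookkeeping—tracking the signs and the pairing of the two normal components $n+1,n+2$ induced by $J$, and making sure that the Ricci commutation for $\n$ on the normal bundle produces exactly the terms $h^\be_{ik}(h^\be_{kl}h^\al_{lj}-h^\al_{kl}h^\be_{lj})$ with the stated signs. Once the calculation for $\p_t h^{n+1}_{ij}$ is done, the formula for $\p_t h^{n+2}_{ij}$ follows by applying $J$ (using $\n_tJ=\n J=0$) and exchanging the roles of the two normal components.
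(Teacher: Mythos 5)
Your proposal follows essentially the same route as the paper: the paper quotes the first–variation formula $\p_t h^\al_{ij}=-V^\al_{,ji}+V^\be h^\al_{ik}h^\be_{jk}+\dots$ from Lemma~8.3 of \cite{AS1} (with the extra term killed by the temporal gauge $\n_t\nu_\al=0$), then substitutes the Simons-type commutation identity for $\De h^\al_{ij}$ from \cite{Wang} and uses $\n J=0$, exactly as you do — the only difference being that you re-derive the variation formula rather than citing it. The remaining discrepancies (e.g.\ the relative signs of $\n_i\n_jV^\al$ and the Weingarten term) are matters of sign convention for $\vec{A}$ and are the bookkeeping you already flag.
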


\begin{proof}
From Lemma 8.3 of \cite{AS1}, we see that
\begin{equation*}
\frac{\partial }{\partial t}h_{ij}^{n+1}=-V^{n+1}_{,ji}+V^{\beta}h_{ik}^{n+1}h_{jk}^{\beta}
 +h_{ij}^{\beta}\< \nu_{\beta}, \overline{\nabla}_{\textbf{V}}\nu_{n+1}\>,
\end{equation*}
and
\begin{equation*}
\frac{\partial }{\partial t}h_{ij}^{n+2}=-V^{n+2}_{,ji}+V^{\beta}h_{ik}^{n+2}h_{jk}^{\beta}
 +h_{ij}^{\beta}\< \nu_{\beta}, \overline{\nabla}_{\textbf{V}}\nu_{n+2}\>.
\end{equation*}
Since by our choice of the normal frame, we have
$$\n_\textbf{V}\nu_\alpha = \n_t\nu_\alpha =0,$$
both of the last terms in the above two identities will disappear.
As for the term $V^{\alpha}_{,ji}$ where $V$ is given by (\ref{V}), recall the following commutation formula (see for example Page 332 of \cite{Wang})
\begin{equation*}
    \Delta h^{\alpha}_{ij}=H^{\alpha}_{,ij}+h^{\alpha}_{im}(h^{\gamma}_{mj}H^{\gamma}-h^{\gamma}_{mk}h^{\gamma}_{kj})
       +h^{\alpha}_{mk}(h^{\gamma}_{mj}h^{\gamma}_{ik}-h^{\gamma}_{mk}h^{\gamma}_{ij})
       +h^{\beta}_{ik}(h^{\beta}_{lj}h^{\alpha}_{lk}-h^{\beta}_{lk}h^{\alpha}_{lj}).
\end{equation*}
Then (\ref{e2.3}) and (\ref{e2.4}) follows by direct computation. Furthermore, since
\begin{equation*}
    (J\Delta\textbf{A})_{ij}=-\Delta h^{n+2}_{ij}\nu_{n+1}+\Delta h^{n+1}_{ij}\nu_{n+2},
\end{equation*}
(\ref{e2.5}) follows easily from (\ref{e2.3}) and (\ref{e2.4}).
\end{proof}

\begin{lemma}
Along the perturbed SMCF (\ref{e:pSMCF}), we have
\begin{equation}\label{e2.6}
    \frac{\partial }{\partial t}|\textbf{A}|^2=\varepsilon\Delta |\textbf{A}|^2-2\epsilon|\nabla \textbf{A}|^2
    +2\< J\Delta \textbf{A},\textbf{A}\>+\textbf{A}*\textbf{A}*\textbf{A}*\textbf{A}.
\end{equation}
In particular, we have
\begin{equation}\label{e2.7}
\frac{d}{dt}\int_{\Sigma}|\textbf{A}|^2d\mu\leq
         -2\epsilon\int_{\Sigma}|\nabla\textbf{A}|^2d\mu-\epsilon\int_{\Sigma}|\textbf{H}|^2|\textbf{A}|^2d\mu
                 +C(n)\int_{\Sigma}|\textbf{A}|^4d\mu.
\end{equation}
\end{lemma}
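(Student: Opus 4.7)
The plan is to differentiate $|\textbf{A}|^2$ using equation \eqref{e2.5} of Lemma~\ref{lem2.2}, and then integrate, exploiting the skew-symmetry of $J$ to kill the Schr\"odinger-type term. More concretely, I would write
\begin{equation*}
|\textbf{A}|^2 = g^{ik}g^{jl}\bar{g}_{\al\be}h^\al_{ij}h^\be_{kl},
\end{equation*}
and differentiate in $t$ using the chain rule. The time derivatives hitting the metrics $g^{ik}$, $g^{jl}$ each contribute $\textbf{A}*\textbf{A}*\textbf{A}*\textbf{A}$ terms by \eqref{metric}, while the derivatives hitting $h^\alpha_{ij}$ produce $2\<\p_t\textbf{A},\textbf{A}\>$. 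Substituting $\p_t\textbf{A}=\ep\De\textbf{A}+J\De\textbf{A}+\textbf{A}*\textbf{A}*\textbf{A}$ from \eqref{e2.5} and applying the standard Bochner-type identity
\begin{equation*}
\De|\textbf{A}|^2 = 2\<\De\textbf{A},\textbf{A}\> + 2|\n\textbf{A}|^2,
\end{equation*}
to rewrite $2\ep\<\De\textbf{A},\textbf{A}\>=\ep\De|\textbf{A}|^2-2\ep|\n\textbf{A}|^2$ yields \eqref{e2.6} directly.

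For \eqref{e2.7}, I would integrate \eqref{e2.6} against the evolving volume form, using \eqref{metric2}:
\begin{equation*}
\dif{t}\int_\Si |\textbf{A}|^2\,d\mu = \int_\Si \p_t|\textbf{A}|^2\,d\mu -\ep\int_\Si |\textbf{A}|^2|\textbf{H}|^2\,d\mu.
\end{equation*}
The term $\int_\Si \ep\De|\textbf{A}|^2\,d\mu$ vanishes by the divergence theorem on the closed surface. The cubic nonlinearity is controlled in the obvious way, $|\textbf{A}*\textbf{A}*\textbf{A}*\textbf{A}|\le C(n)|\textbf{A}|^4$. The key step is to show that the Schr\"odinger-type term disappears after integration:
\begin{equation*}
\int_\Si 2\<J\De\textbf{A},\textbf{A}\>\,d\mu = 0.
\end{equation*}
By integration by parts, together with $\n J=0$ from Lemma~\ref{l:parallel}, this integral equals $-2\int_\Si g^{mn}\<J\n_m\textbf{A},\n_n\textbf{A}\>\,d\mu$, where the pairing is across all free tensor indices with the induced metrics. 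Evaluating in a normal frame at a point, every pointwise summand has the form $\<JW,W\>$ for a normal vector $W=\n_m h^\al_{ij}\,\nu_\al$, and skew-symmetry of $J$ on the normal bundle forces $\<JW,W\>=0$. Putting these pieces together yields \eqref{e2.7}.

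There is no real obstacle here; the step that carries the geometric content is the vanishing of $\int\<J\De\textbf{A},\textbf{A}\>\,d\mu$, which reflects the Hamiltonian nature of the SMCF and is the reason the $L^2$-norm of $\textbf{A}$ can only grow through the parabolic perturbation $\ep\textbf{H}$ and the cubic nonlinearity. The remaining work is routine bookkeeping of the $\textbf{A}*\textbf{A}*\textbf{A}*\textbf{A}$ terms coming from \eqref{e2.5} and the metric variation, all of which are absorbed into the $C(n)\int |\textbf{A}|^4\,d\mu$ term on the right-hand side of \eqref{e2.7}.
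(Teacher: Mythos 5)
Your proposal is correct and follows essentially the same route as the paper: differentiate $|\textbf{A}|^2$ using \eqref{e2.5} and the metric evolution, rewrite $2\ep\<\De\textbf{A},\textbf{A}\>$ via the Bochner identity to get \eqref{e2.6}, then integrate against the evolving volume form and kill the term $\int 2\<J\De\textbf{A},\textbf{A}\>\,d\mu$ by integrating by parts (using $\n J=0$ from Lemma~\ref{l:parallel}) and the pointwise skew-symmetry of $J$. No gaps.
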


\begin{proof}
From (\ref{metric}), we see that $\frac{\partial}{\partial t}g_{ij}=\textbf{A}*\textbf{A}$, which implies that $\frac{\partial}{\partial t}g^{ij}=\textbf{A}*\textbf{A}$. Therefore, using
(\ref{e2.5}), we compute
\begin{eqnarray*}
 \frac{\partial }{\partial t}|\textbf{A}|^2
   &=& \frac{\partial }{\partial t}\left(g^{ik}g^{jl}h^{\alpha}_{ij}h^{\alpha}_{jl}\right)
        =\textbf{A}*\textbf{A}*\textbf{A}*\textbf{A}+2\< \textbf{A},\frac{\partial}{\partial t}\textbf{A}\>\\
   &=& 2\varepsilon\< \textbf{A},\Delta \textbf{A}\>+2\< J\Delta \textbf{A},\textbf{A}\>
        +\textbf{A}*\textbf{A}*\textbf{A}*\textbf{A}\\
   &=& \varepsilon\Delta |\textbf{A}|^2-2\epsilon|\nabla \textbf{A}|^2
    +2\< J\Delta \textbf{A},\textbf{A}\>+\textbf{A}*\textbf{A}*\textbf{A}*\textbf{A}.
\end{eqnarray*}
Furthermore, from (\ref{e2.6}) and (\ref{metric2}), we have
\begin{eqnarray*}
\frac{d}{dt}\int_{\Sigma}|\textbf{A}|^2d\mu
   &=& \int_{\Sigma}\left(\varepsilon\Delta |\textbf{A}|^2-2\epsilon|\nabla \textbf{A}|^2
    +2\< J\Delta \textbf{A},\textbf{A}\>+\textbf{A}*\textbf{A}*\textbf{A}*\textbf{A}-\varepsilon|\textbf{H}|^2|\textbf{A}|^2\right)d\mu\\
   &=& \int_{\Sigma}\left(-2\epsilon|\nabla \textbf{A}|^2-2\< J\nabla \textbf{A},\nabla\textbf{A}\>
      +\textbf{A}*\textbf{A}*\textbf{A}*\textbf{A}-\varepsilon|\textbf{H}|^2|\textbf{A}|^2\right)d\mu\\
   &=& \int_{\Sigma}\left(-2\epsilon|\nabla \textbf{A}|^2
      +\textbf{A}*\textbf{A}*\textbf{A}*\textbf{A}-\varepsilon|\textbf{H}|^2|\textbf{A}|^2\right)d\mu\\
   &\leq& -2\epsilon\int_{\Sigma}|\nabla\textbf{A}|^2d\mu-\epsilon\int_{\Sigma}|\textbf{H}|^2|\textbf{A}|^2d\mu
                 +C(n)\int_{\Sigma}|\textbf{A}|^4d\mu.
\end{eqnarray*}
Here, we have used Lemma \ref{l:parallel} and the fact that the complex structure $J$ is skew-symmetric.
\end{proof}

In order to get the evolution equation for derivatives of the second fundamental form, we need the following commutation formulas (see Lemma 3.2 of \cite{HanSun}).

\begin{lemma}
Suppose $g_{t}$ is a family of metric on $\Sigma$ satisfying
$\frac{\partial g_{t}}{\partial t}=h$. Let $\Delta$ and $\nabla$ be
the Laplacian and connection induced by $g_{t}$. Then for any tensor
$S$ on $\Sigma$, we have
\begin{equation}\label{e2.9}
\frac{\partial}{\partial t}\nabla S- \nabla \frac{\partial}{\partial
 t}S=S*\nabla h,
\end{equation}
\begin{equation}\label{e2.10}
\nabla (\Delta S)- \Delta(\nabla S)=\nabla Rm *S+Rm*\nabla S.
\end{equation}
Here $Rm$ is the curvature tensor on $\Sigma$.
\end{lemma}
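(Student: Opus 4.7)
The lemma contains two independent identities, and my plan is to treat them separately using standard local-coordinate tensor calculus.

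For the first identity, the plan is to work at a fixed point $x\in\Sigma$ and note that the connection $\nabla$ differs from the flat coordinate derivative $\partial$ by the Christoffel symbols $\Gamma(g_t)$ of the time-dependent metric. Since $\partial$ commutes with $\partial_t$, the only obstruction to commuting $\partial_t$ with $\nabla$ comes from $\partial_t \Gamma$. Schematically I would write
\begin{equation*}
\frac{\partial}{\partial t}\nabla S - \nabla \frac{\partial}{\partial t} S = -\left(\frac{\partial}{\partial t}\Gamma\right) * S.
\end{equation*}
Then I would compute $\partial_t \Gamma_{ij}^{k}$ by differentiating the formula $\Gamma_{ij}^{k}=\frac{1}{2}g^{kl}(\partial_i g_{jl}+\partial_j g_{il}-\partial_l g_{ij})$ and passing to normal coordinates at $x$ (so that $\Gamma(x)=0$ and $\partial g=0$). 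This converts all coordinate derivatives into covariant derivatives and yields the well-known variation formula
\begin{equation*}
\frac{\partial}{\partial t}\Gamma_{ij}^{k} = \frac{1}{2}g^{kl}\left(\nabla_i h_{jl}+\nabla_j h_{il}-\nabla_l h_{ij}\right),
\end{equation*}
which is of the form $g^{-1}*\nabla h$. Inserting this back into the display above gives exactly $S*\nabla h$, establishing (\ref{e2.9}).

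For the second identity, the plan is to reduce everything to the Ricci identity $[\nabla_i,\nabla_j]T = Rm * T$, which is valid for any tensor $T$. Writing $\Delta S = g^{ij}\nabla_i\nabla_j S$, I would expand
\begin{equation*}
\nabla_k\Delta S - \Delta\nabla_k S = g^{ij}\bigl(\nabla_k\nabla_i\nabla_j S - \nabla_i\nabla_j\nabla_k S\bigr),
\end{equation*}
and split the right-hand side by inserting $\pm\nabla_i\nabla_k\nabla_j S$. The first piece $\nabla_k\nabla_i\nabla_j S - \nabla_i\nabla_k\nabla_j S$ is the commutator applied to $T=\nabla_j S$, giving a term of the form $Rm*\nabla S$. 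The second piece $\nabla_i\nabla_k\nabla_j S - \nabla_i\nabla_j\nabla_k S = \nabla_i([\nabla_k,\nabla_j]S) = \nabla_i(Rm*S)$, which produces $\nabla Rm*S + Rm*\nabla S$ by the Leibniz rule. Tracing with $g^{ij}$ and absorbing everything into the schematic notation yields (\ref{e2.10}).

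Neither piece poses a real obstacle: these are textbook computations, and the only mild care needed is in the first identity, where one must justify passing from coordinate to covariant derivatives in the variation of $\Gamma$; the standard trick of evaluating in normal coordinates at a point handles this cleanly. The resulting formulas are written in the symbolic $*$-notation of the paper, which already absorbs the dependence on $g^{-1}$ and on the tensor type of $S$, so no case analysis is required.
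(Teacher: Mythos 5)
Your argument is correct, and both computations are the standard ones: the first identity follows from the tensoriality of $\partial_t\Gamma$ together with the variation formula $\partial_t\Gamma_{ij}^{k}=\tfrac12 g^{kl}(\nabla_i h_{jl}+\nabla_j h_{il}-\nabla_l h_{ij})$, and the second from two applications of the Ricci identity after inserting $\pm\,g^{ij}\nabla_i\nabla_k\nabla_j S$ (using $\nabla g=0$ to pull $g^{ij}$ through). For comparison, the paper does not prove this lemma at all — it simply cites Lemma 3.2 of Han--Sun — so your write-up supplies the details the authors chose to outsource; the only point worth stating explicitly in a final version is that $\partial_t\Gamma$ is a tensor (being the $t$-derivative of a difference of connections), which is what licenses evaluating it in normal coordinates at a point and then promoting the result to a global identity.
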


Then we can prove the following lemma by induction.

\begin{lemma}\label{prop2.4}
Along the perturbed SMCF (\ref{e:pSMCF}), we have for any integer $l\geq 0$,
\begin{equation}\label{e2.11}
    \frac{\partial}{\partial t}\nabla^{l}\textbf{A}=\varepsilon\Delta\nabla^{l}\textbf{A}+J\Delta\nabla^{l}\textbf{A}
                 +\sum_{i+j+k=l}\nabla^{i}\textbf{A}*\nabla^{j}\textbf{A}*\nabla^{k}\textbf{A}.
\end{equation}
As a consequence, we have
\begin{eqnarray}\label{e2.12}
 \frac{\partial}{\partial t}|\nabla^{l}\textbf{A}|^2
   &\leq& \varepsilon\Delta|\nabla^{l}\textbf{A}|^2-2\epsilon|\nabla^{l+1} \textbf{A}|^2
                +\< J\Delta\nabla^{l}\textbf{A},\nabla^{l}\textbf{A}\> \nonumber\\
   & &  +c(n,l)\sum_{i+j+k=l}|\nabla^{i}\textbf{A}|\cdot|\nabla^{j}\textbf{A}|\cdot|\nabla^{k}\textbf{A}|\cdot|\nabla^{l}\textbf{A}|,
\end{eqnarray}
where $c(n, l)$ is a constant depending on $n$ and $l$.
\end{lemma}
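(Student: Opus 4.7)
The plan is to prove (\ref{e2.11}) by induction on $l$, with the base case $l=0$ being exactly equation~(\ref{e2.5}) already established in Lemma~\ref{lem2.2}. For the inductive step, assume (\ref{e2.11}) holds at level $l$ and apply $\nabla$ to both sides. On the left we exchange $\partial_t$ and $\nabla$ via the commutation formula (\ref{e2.9}), noting that from (\ref{metric}) the time derivative of the metric is $\partial_t g = \textbf{A}*\textbf{A}$; the resulting error $\nabla^l \textbf{A} * \nabla(\textbf{A}*\textbf{A})$ is schematically a sum of terms $\nabla^i\textbf{A}*\nabla^j\textbf{A}*\nabla^k\textbf{A}$ with $i+j+k=l+1$ after Leibniz expansion.

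On the right side I push $\nabla$ through each operator. Since $\nabla J = 0$ by Lemma~\ref{l:parallel}, the complex structure commutes with $\nabla$, so $\nabla(J\Delta \nabla^l \textbf{A})$ becomes $J\nabla\Delta\nabla^l\textbf{A}$. To exchange $\nabla$ and $\Delta$ I invoke (\ref{e2.10}), which generates commutator terms of the form $\nabla Rm * \nabla^l\textbf{A} + Rm * \nabla^{l+1}\textbf{A}$. The key observation is that by the Gauss equation, the intrinsic curvature $Rm$ is schematically $\textbf{A}*\textbf{A}$, so these commutators also contribute to the cubic sum with $i+j+k=l+1$. Distributing $\nabla$ across the cubic term $\sum_{i+j+k=l}\nabla^i\textbf{A}*\nabla^j\textbf{A}*\nabla^k\textbf{A}$ by Leibniz preserves the schematic form at level $l+1$, completing the induction.

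For the consequence (\ref{e2.12}), I compute
\begin{equation*}
\frac{\partial}{\partial t}|\nabla^l\textbf{A}|^2 = 2\langle \nabla^l\textbf{A}, \tfrac{\partial}{\partial t}\nabla^l\textbf{A}\rangle + (\partial_t g^{-1}\text{-terms}),
\end{equation*}
where the metric-derivative contribution is quartic in $\textbf{A}$ and hence absorbable into the sum $c(n,l)\sum|\nabla^i\textbf{A}||\nabla^j\textbf{A}||\nabla^k\textbf{A}||\nabla^l\textbf{A}|$. Substituting (\ref{e2.11}) into the inner product and applying the Bochner-type identity
\begin{equation*}
\varepsilon\Delta|\nabla^l\textbf{A}|^2 = 2\varepsilon\langle \nabla^l\textbf{A},\Delta\nabla^l\textbf{A}\rangle + 2\varepsilon|\nabla^{l+1}\textbf{A}|^2
\end{equation*}
yields the stated inequality, with Cauchy--Schwarz bounding the cubic-times-$\nabla^l\textbf{A}$ pairing by the product of norms.

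The main obstacle, such as it is, lies in bookkeeping: one must verify that every commutator correction (from swapping $\partial_t$ past $\nabla$, and from swapping $\nabla$ past $\Delta$) factors through either $\partial_t g = \textbf{A}*\textbf{A}$ or $Rm = \textbf{A}*\textbf{A}$, so that the index budget $i+j+k=l+1$ is respected and no higher-order derivatives leak into the cubic remainder. No genuine analytic difficulty arises beyond this accounting, since Lemma~\ref{l:parallel} handles the interaction with $J$ cleanly and the structure of equation (\ref{e2.5}) is preserved at each differentiation.
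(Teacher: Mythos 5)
Your proposal is correct and follows essentially the same route as the paper: both arguments reduce $\partial_t\nabla^l\textbf{A}$ to $\nabla^l\partial_t\textbf{A}$ via (\ref{e2.9}) with $\partial_t g=\textbf{A}*\textbf{A}$, commute $\nabla^l$ past $\Delta$ via (\ref{e2.10}) with $Rm=\textbf{A}*\textbf{A}$ from the Gauss equation, use $\nabla J=0$ to pass $J$ through, and then obtain (\ref{e2.12}) from the Bochner-type identity; the paper merely unrolls the two commutations telescopically rather than phrasing them as a single induction on $l$.
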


\begin{proof}
By (\ref{metric}), we see that $h=\frac{\partial g}{\partial t}=\textbf{A}*\textbf{A}$, so that $\nabla h=\nabla\textbf{A}*\textbf{A}$. Applying (\ref{e2.9}) inductively , we get that
\begin{eqnarray*}
\frac{\partial}{\partial t}\nabla^{l}\textbf{A}
   &=& \nabla\frac{\partial}{\partial t}\nabla^{l-1}\textbf{A}+\nabla^{l-1}\textbf{A}*\nabla\textbf{A}*\textbf{A} \\
   &=& \nabla\left(\nabla\frac{\partial}{\partial t}\nabla^{l-2}\textbf{A}+\nabla^{l-2}\textbf{A}*\nabla\textbf{A}*\textbf{A}\right)
            +\nabla^{l-1}\textbf{A}*\nabla\textbf{A}*\textbf{A} \\
   &=& \nabla^l\frac{\partial}{\partial t}\textbf{A}+\sum_{i+j+k=l}\nabla^{i}\textbf{A}*\nabla^{j}\textbf{A}*\nabla^{k}\textbf{A}\\
   &=& \varepsilon\nabla^l\Delta\textbf{A}+J\nabla^l\Delta \textbf{A}+\sum_{i+j+k=l}\nabla^{i}\textbf{A}*\nabla^{j}\textbf{A}*\nabla^{k}\textbf{A}.
\end{eqnarray*}
Here in the last equality, we have used (\ref{e2.5}) and Lemma \ref{l:parallel}.

Next, note that by Gauss equation, the curvature tensor $Rm$ on $\Sigma$ can be expressed as $Rm=\textbf{A}*\textbf{A}$, so that $\nabla Rm=\nabla\textbf{A}*\textbf{A}$. Then inductively
applying (\ref{e2.10}), we get that
\begin{eqnarray*}
\nabla^{l}\Delta\textbf{A}
   &=&\nabla^{l-1}\left(\Delta\nabla\textbf{A}+\nabla\textbf{A}*\textbf{A}*\textbf{A}\right) \\
   &=& \nabla^{l-2}\left(\Delta\nabla^2\textbf{A}+\nabla\textbf{A}*\nabla\textbf{A}*\textbf{A}+\nabla^2\textbf{A}*\textbf{A}*\textbf{A}\right)
            +\sum_{i+j+k=l}\nabla^{i}\textbf{A}*\nabla^{j}\textbf{A}*\nabla^{k}\textbf{A} \\
   &=& \nabla^{l-2}\Delta\nabla^2\textbf{A}+\sum_{i+j+k=l}\nabla^{i}\textbf{A}*\nabla^{j}\textbf{A}*\nabla^{k}\textbf{A} \\
   &=& \cdots \\
   &=& \Delta\nabla^{l}\textbf{A}+\sum_{i+j+k=l}\nabla^{i}\textbf{A}*\nabla^{j}\textbf{A}*\nabla^{k}\textbf{A}.
\end{eqnarray*}
Combining the above two equalities together gives us (\ref{e2.11}). Then (\ref{e2.12}) follows easily.
\end{proof}

The next inequality is a direct corollary of Lemma \ref{lem2.1} and Lemma \ref{prop2.4}.

\begin{lemma}
Along the perturbed SMCF (\ref{e:pSMCF}), we have
\begin{equation}\label{e2.13}
\frac{d}{dt}\int_{\Sigma}|\nabla^{l}\textbf{A}|^2d\mu \le
c(n,l)\sum_{i+j+k=l}\int_{\Sigma}|\nabla^{i}\textbf{A}|\cdot|\nabla^{j}\textbf{A}|\cdot|\nabla^{k}\textbf{A}|\cdot|\nabla^{l}\textbf{A}|d\mu.
\end{equation}
\end{lemma}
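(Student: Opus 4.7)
The plan is to integrate the pointwise differential inequality (\ref{e2.12}) from Lemma~\ref{prop2.4} over $\Si$ and combine it with the volume form evolution $\p_t d\mu = -\ep|\vec{H}|^2 d\mu$ from (\ref{metric2}). Writing
\[
\frac{d}{dt}\int_\Si |\n^l\vec{A}|^2 d\mu = \int_\Si \p_t |\n^l\vec{A}|^2 d\mu + \int_\Si |\n^l\vec{A}|^2 \p_t d\mu
\]
and substituting produces five groups of terms.

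Three of them are immediate. The Laplacian term $\ep\int_\Si \Delta|\n^l\vec{A}|^2 d\mu$ vanishes by the divergence theorem on the closed surface $\Si$. The dissipative contributions $-2\ep\int_\Si|\n^{l+1}\vec{A}|^2 d\mu$ and $-\ep\int_\Si |\vec{H}|^2|\n^l\vec{A}|^2 d\mu$ are non-positive and hence may be dropped. The cubic contribution from (\ref{e2.12}) already matches the right-hand side of (\ref{e2.13}) up to the constant $c(n,l)$.

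The only delicate piece is the skew term $\int_\Si \<J\Delta\n^l\vec{A}, \n^l\vec{A}\> d\mu$. Since $\n J=0$ by Lemma~\ref{l:parallel}, $J$ commutes with the normal Laplacian $\Delta$, so integration by parts on the closed manifold yields $-\int_\Si \<J\n^{l+1}\vec{A}, \n^{l+1}\vec{A}\> d\mu$. Because $J$ is anti-symmetric on the normal bundle (being orthogonal with $J^2 = -\mathrm{Id}$), and the tangential indices of $\n^{l+1}\vec{A}$ are contracted symmetrically via $g^{-1}$, the integrand vanishes pointwise, so this term contributes zero. Assembling the remaining pieces gives (\ref{e2.13}). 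The only genuine obstacle is the careful handling of the $J\Delta$ term, but both ingredients we need---namely $\n J = 0$ from Lemma~\ref{l:parallel} and the pointwise vanishing $\<JS, S\>=0$---are already available, so the argument reduces to bookkeeping.
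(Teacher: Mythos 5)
Your proposal is correct and follows essentially the same route as the paper: integrate (\ref{e2.12}), use (\ref{metric2}) for the volume form, discard the non-positive dissipative terms and the total Laplacian, and kill $\int_\Si\<J\Delta\n^l\vec{A},\n^l\vec{A}\>d\mu$ by integrating by parts (using $\n J=0$ from Lemma~\ref{l:parallel}) together with the anti-symmetry of $J$. This is precisely the argument the paper carries out explicitly for $l=0$ in deriving (\ref{e2.7}) and then invokes for general $l$.
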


Next, let's recall the following interpolation inequality proved by Hamilton (\cite{Ha}, Section 12)

\begin{lemma}\label{lemma-interpo}
If $T$ is any tensor and if $1\leq i\leq l-1$, then with a constant $C=C(n,l)$ depending only on $n=\mbox{dim} \Sigma$ and $l$, which is independent of the metric $g$ and the connection
$\Gamma$, we have the estimate
\begin{equation*}
\int_{\Sigma}|\nabla^i T|^{\frac{2l}{i}}d\mu \leq C \max_{\Sigma}|T|^{2\left(\frac{l}{i}-1\right)}\int_{\Sigma}|\nabla^l T|^2d\mu.
\end{equation*}
\end{lemma}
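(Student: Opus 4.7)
The plan is to follow Hamilton's classical argument, which proceeds by induction and relies solely on integration by parts together with H\"older's inequality. Since no commutation of covariant derivatives is ever invoked, no curvature terms appear, which is precisely why the constant $C(n,l)$ depends on neither the metric $g$ nor the connection $\Gamma$.

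For the base case $l=2$, $i=1$, I would establish $\int_\Sigma|\nabla T|^4\,d\mu \le C\max_\Sigma|T|^2\int_\Sigma|\nabla^2 T|^2\,d\mu$ by writing $|\nabla T|^4=\langle\nabla T,\,|\nabla T|^2\nabla T\rangle$ and integrating by parts to transfer a derivative onto $T$:
\begin{equation*}
\int_\Sigma|\nabla T|^4\,d\mu = -\int_\Sigma\bigl\langle T,\,\operatorname{div}(|\nabla T|^2\nabla T)\bigr\rangle\,d\mu \le C\max_\Sigma|T|\int_\Sigma|\nabla T|^2|\nabla^2 T|\,d\mu,
\end{equation*}
where I use the pointwise bound $\bigl|\nabla|\nabla T|^2\bigr|\le 2|\nabla T||\nabla^2 T|$. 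Cauchy--Schwarz on the right then closes the base case after rearranging.

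For the general case I would set $\phi(i):=\log\|\nabla^i T\|_{2l/i}$ with the conventions $\phi(0)=\log\max_\Sigma|T|$ and $\phi(l)=\tfrac12\log\int_\Sigma|\nabla^l T|^2\,d\mu$, and establish the log-convexity estimate
\begin{equation*}
\|\nabla^i T\|_{2l/i}^2 \,\le\, C\,\|\nabla^{i-1} T\|_{2l/(i-1)}\,\|\nabla^{i+1} T\|_{2l/(i+1)} \qquad\text{for }1\le i\le l-1.
\end{equation*}
To prove this, I would integrate by parts in $\int_\Sigma\langle\nabla^i T,\,|\nabla^i T|^{2l/i-2}\nabla^i T\rangle$, transferring a derivative from the first factor onto $\nabla^{i-1}T$ (producing either $\nabla^{i+1}T$ from the second factor or $|\nabla^i T|^{2l/i-3}\nabla^{i+1}T$ from differentiating the middle weight). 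H\"older's inequality with exponents $2l/(i-1)$, $2l/(i+1)$, and $2l/i$ suitably distributed among the resulting factors gives a bound with an extra $\|\nabla^i T\|_{2l/i}^{2l/i-2}$ on the right, which I would then absorb back to the left. Iterating the resulting log-convexity relation from $i=l-1$ downward to $i=1$ (a discrete convexity argument with endpoint values $\phi(0)$ and $\phi(l)$) yields $\phi(i)\le (1-i/l)\phi(0)+(i/l)\phi(l)$, which after exponentiation is exactly the desired estimate.

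The main obstacle is the careful bookkeeping of the H\"older exponents: one must check that they are conjugate, that the $|\nabla^i T|$-factors produced by differentiating the middle weight can be cleanly absorbed to the left-hand side, and that the constants accumulating through the discrete convexity iteration remain controlled purely by $n$ and $l$. Once the single-step log-convexity inequality is established, the discrete convexity on $\{0,1,\ldots,l\}$ handles the rest automatically.
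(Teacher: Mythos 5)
Your proposal is correct and is essentially Hamilton's original argument (integration by parts, H\"older with conjugate exponents $\tfrac{2l}{i-1},\tfrac{2l}{i+1},\tfrac{2l}{2l-2i}\cdot i$, absorption, and discrete log-convexity), which is exactly what the paper relies on: it does not reprove the lemma but cites Section 12 of Hamilton's paper, where this is Theorem 12.1 together with its corollaries. The exponent bookkeeping in your single-step inequality checks out, so nothing further is needed.
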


Finally, by using (\ref{e2.13}) and Lemma \ref{lemma-interpo} in the same way as in Section 7 of \cite{Hu1}, we may obtain

\begin{lemma}\label{Lemma7.1}
Along the perturbed SMCF (\ref{e:pSMCF}), we have
\begin{equation}\label{E7.1}
\frac{d}{dt}
\int_{\Sigma(t)}|\nabla^{l}\textbf{A}|^2d\mu \leq   c(n,l)\max_{\Sigma}|\textbf{A}|^2
         \int_{\Sigma}|\nabla^{l}\textbf{A}|^2d\mu.
\end{equation}
\end{lemma}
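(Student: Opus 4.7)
The plan is to derive the estimate from inequality~\eqref{e2.13} by bounding each of the cubic-like integrals on its right-hand side via H\"older's inequality combined with Hamilton's interpolation inequality (Lemma~\ref{lemma-interpo}), following Huisken's scheme in~\cite{Hu1}. The case $l=0$ is already contained in \eqref{e2.7}, so I would focus on $l\ge 1$ below.

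Fixing a multi-index $(i,j,k)$ with $i+j+k=l$, I would first handle the generic case $1\le i,j,k\le l-1$ by applying H\"older's inequality with exponents $2l/i$, $2l/j$, $2l/k$ and $2$, whose reciprocals sum to $\tfrac{i+j+k}{2l}+\tfrac12=1$. This reduces the integral to a product of four $L^p$-norms. Next, I would apply Lemma~\ref{lemma-interpo} to each of the first three factors, obtaining $\|\nabla^r\vec{A}\|_{L^{2l/r}} \le C \max_\Sigma|\vec{A}|^{(l-r)/l}\|\nabla^l\vec{A}\|_{L^2}^{r/l}$ for $r\in\{i,j,k\}$. Multiplying these and again invoking $i+j+k=l$, the three interpolation exponents combine to give precisely $\max_\Sigma|\vec{A}|^2$ times $\|\nabla^l\vec{A}\|_{L^2}$, which together with the extra $\|\nabla^l\vec{A}\|_{L^2}$ from H\"older's inequality yields the bound $c(n,l)\max_\Sigma|\vec{A}|^2\|\nabla^l\vec{A}\|_{L^2}^2$ for this term.

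For the boundary cases I would argue as follows: if an index vanishes, say $i=0$, then I pull $|\nabla^0\vec{A}|\le \max_\Sigma|\vec{A}|$ out of the integral and repeat the above argument for the remaining two derivative factors, whose orders satisfy $j+k=l$ so that the H\"older exponents $2l/j$, $2l/k$, $2$ still have reciprocals summing to one. If an index equals $l$, the other two must vanish and the estimate is immediate after factoring out $\max_\Sigma|\vec{A}|^2$. Summing over the finitely many multi-indices appearing in \eqref{e2.13} and absorbing constants into $c(n,l)$ then completes the proof.

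The main obstacle is essentially bookkeeping: one must verify that the H\"older exponents sum to one and, crucially, that the constraint $i+j+k=l$ makes the interpolation exponents combine so as to produce exactly the powers $\max_\Sigma|\vec{A}|^2$ and $\|\nabla^l\vec{A}\|_{L^2}^2$ on the right --- this is precisely why Hamilton's interpolation inequality is stated with the tailored exponent $2l/i$. No new difficulty is introduced by the skew-symmetric term, since the $J$-contribution has already been absorbed into the $*$-notation in \eqref{e2.11} before arriving at \eqref{e2.13}, and therefore plays no role in the present $L^2$-estimate.
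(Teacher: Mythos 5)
Your proposal is correct and follows exactly the route the paper takes: the paper's proof consists of citing \eqref{e2.13} together with Hamilton's interpolation inequality (Lemma~\ref{lemma-interpo}) ``in the same way as in Section 7 of \cite{Hu1}'', and your H\"older/interpolation bookkeeping --- including the exponent check $\tfrac{i+j+k}{2l}+\tfrac12=1$, the combination of the interpolation exponents into $\max_\Sigma|\textbf{A}|^2\,\|\nabla^l\textbf{A}\|_{L^2}^2$, and the separate treatment of the boundary indices $i=0$ and $i=l$ where Lemma~\ref{lemma-interpo} does not apply --- is precisely that argument spelled out.
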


\subsection{Proof of the main theorem}\label{s:existence}

Now we come back to the case of two dimensional SMCF in $\Real^4$ and finish the proof of local existence of SMCF.

\begin{proof}[Proof of Theorem \ref{t:main}]
By Lemma \ref{prop3.2}, we know that for each $\ep>0$, there exists a smooth solution $F_\ep$ to (\ref{app-flow}) on a maximal time interval
$[0,T_{\ep})$. For convenience, we denote the second fundamental form of $F_\ep$ at time $t$ by $\A_\ep(t)$.

For each $0<\ep<1/4$, we define a time $T'_\ep\in [0,T_\ep]$ by the maximal time such that for all $t\in [0,T'_\ep)$,
\begin{equation*}
    \norm{\textbf{A}_{\varepsilon}(t)}_{H^{2,2}}\leq 2\norm{\textbf{A}_0}_{H^{2,2}}:= B.
\end{equation*}
Obviously $T'_\ep$ is positive since $A_\ep(t)$ is smooth on $t$. In fact, we will show that there is a uniform positive lower bound for $T'_\ep$.

To see this, we first note that by Lemma~\ref{lem2.1}, for all $t\in[0,T^{'}_\varepsilon)$ the volume of $\Si_\ep(t):=F_\ep(t, \Si)$ satisfies
\begin{equation*}
\frac{d}{dt}|\Si_\ep|=-\ep\int_{\Si_\ep}|\textbf{H}_\ep|^2d\mu_\ep\ge -2\ep\norm{\textbf{A}_\ep(t)}_{L^2}^2\ge-\frac{1}{2}B^2.
\end{equation*}
Thus there exists a uniform time $T_1:= |\Si_0|/B^2$, such that for all $t\in [0,T_\ep')\cap [0,T_1]$
\begin{equation}\label{E7.3}
|\Si_0|\ge |\Si_\ep(t)|\ge |\Si_0|-\frac{1}{2}B^2T_1=\frac{|\Si_0|}{2}:=m.
\end{equation}

Now if $T'_\ep\ge T_1$, then we already have a lower bound. Thus we may assume $T'_\ep<T_1$ and in this case, we claim that $T'_\ep<T_\ep$.

First we assume that the claim is true, then clearly by the definition of $T'_\ep$ we have
\[ \norm{\A_\ep(T'_\ep)}_{H^{2,2}}=2\norm{\A_0}_{H^{2,2}}.\]
Applying Theorem~\ref{t:sobolev}, we have a uniform $C^0$ bound of the second fundamental form
\begin{equation}\label{E.c0}
\norm{\textbf{A}_\ep(t)}_{C^0}\le C(B,m)
\end{equation}
on the time interval $[0,T_\ep')$.
It follows from Lemma \ref{Lemma7.1} that for any integer $l\ge 0$,
\begin{equation*}
\frac{d}{dt}
\int_{\Sigma_{\varepsilon}(t)}|\nabla^{l}\textbf{A}_{\varepsilon}(t)|^2d\mu \leq   c(2,l)C(B,m) ^2    \int_{\Sigma_{\varepsilon}(t)}|\nabla^{l}\textbf{A}_{\varepsilon}(t)|^2d\mu.
\end{equation*}
Consequently, by Gronwall's inequality, we have
\begin{equation}\label{E7.19}
\norm{\textbf{A}_{\varepsilon}(t)}_{H^{l,2}}\le e^{c_lC(B,m)^2t}\norm{\textbf{A}_0}_{H^{l,2}},
\end{equation}
where $c_l:=\max\{c(2,0), \cdots, c(2,l)\}$ only depends on $l$.

Setting $t=T^{'}_{\varepsilon}$ and $l=2$ in (\ref{E7.19}) yields
\begin{equation*}
\norm{\A_\ep(T'_\ep)}_{H^{2,2}}=2\norm{\textbf{A}_0}_{H^{2,2}}\leq e^{c_2C(B,m)^2T^{'}_{\varepsilon}}\norm{\textbf{A}_0}_{H^{2,2}}.
\end{equation*}
It follows that
\begin{equation*}
T^{'}_{\varepsilon}\geq \frac{\log2}{c_2C(B,m)^2}:= T_2.
\end{equation*}
Therefore, we get a uniform lower bound for $T^{'}_{\varepsilon}$ given by $T_0:=\min\{T_1, T_2\}$, which is decided by $\norm{\textbf{A}_0}_{H^{2,2}}$ and $|\Si_0|$.

Next we restrict ourselves on the time span $[0,T_0]$. For any $\varepsilon\in (0,1/4)$, we have uniform bounds of the volume $|\Si_\ep(t)|$ by (\ref{E7.3}) and the $C^0$-norm of
$\textbf{A}_\ep(t)$ by (\ref{E.c0}). Moreover, if $F_0\in C^{\infty}$ and hence $\textbf{A}_0\in C^{\infty}$, we have uniform bounds on $H^{l,2}$-norm of $\textbf{A}_\ep(t)$ for any $l\geq0$ by
(\ref{E7.19}). Then Lemma \ref{lemma-interpo} yields
\begin{equation*}
\int_{\Sigma_{\varepsilon}(t)}|\nabla^{k}\textbf{A}_{\varepsilon}(t)|^pd\mu\leq C(k,p),
\end{equation*}
for all $k\geq 0$ and $p>0$. Then a version of Michael-Simon inequality (see, for example, Theorem 5.6 of \cite{KS1}) implies
\begin{equation}\label{e:uni}
\norm{\textbf{A}_\ep(t)}_{C^k}\leq C(k),
\end{equation}
for any $k\geq 0$.

It follows from (\ref{e:uni}) and standard arguments (cf. \cite{KS1}, Section 4) that in every local chart, we have
\begin{equation*}
\norm{\partial ^k F_\ep(t)}_{\infty}, \ \norm{\partial ^k\partial_t F_\ep(t)}_{\infty} \leq C(k,F_0),
\end{equation*}
for  any $k\geq0$, where $\partial$ is the partial derivatives in the local charts. Then by Arzela-Ascoli Theorem, we conclude that there is a sub-sequence $\ep_i\to 0$ such that $F_{\ep_i}$ converging smoothly
to a limit $F_{\infty}\in C^\infty([0,T_0]\times \Si)$.
By taking $\ep_i \to 0$ in (\ref{app-flow}), it easy to verify that $F_{\infty}$ is a smooth solution to the SMCF (\ref{e:SMCF1}).

Finally, it remains to prove our claim on $T'_\ep$. We argue by contradiction and suppose $T'_\ep=T_\ep$ is the maximal existence time. Then by repeating the above arguments for $F_\ep$ on $[0,T_\ep)$, we see that in every local chart we have uniform bounds of $\norm{\partial ^k F_\ep(t)}_{\infty}$ and $\norm{\partial ^k\partial_t F_\ep(t)}_{\infty}$. It follows that $F_\ep(t)$ converges smoothly to an immersion as $t\to T_\ep$. By Lemma~\ref{prop3.2}, the flow can be continued for another positive time interval. This, however, contradicts with the definition of $T_\ep$.
\end{proof}

\begin{remark}
With similar arguments, it is easy to show that the maximal existence time of the local solution $F_\infty$ is characterized by the first time $T$ such that
\[ \lim_{t\to T}\norm{\textbf{A}(t)}_{H^{2,2}}=\infty.\]
\end{remark}


\begin{thebibliography}{20}

\bibitem{AS1}
C. Arezzo and J. Sun, {\em Self-shrinkers for the Mean Curvature Flow in Arbitrary Codimensions}, Math. Z., {\bf 274} (2013), 993-1027.

\bibitem{Ba}
C. Baker, {\em The mean curvature flow of submanifolds of high codimension}, Ph.D. thesis,  Australian National University, 2010; arXiv:1104.4409.

\bibitem{BOS}
F. Bethuel, G. Orlandi and D. Smets, {\em Convergence of the parabolic Ginzburg-Landau equation to motion by mean curvature},
Ann. of Math. \textbf{163}(2) (2006), no. 1, 37-163.

\bibitem{Breu}
P. Breuning, {\em Immersions with bounded second fundamental form}, J. Geom. Anal., {\bf 25}(2015), no. 2, 1344-1386.

\bibitem{CL}
L.Cheung and P. Leung, {\em The mean curvature and volume growth of complete noncompact submanifolds}, Differential Geom. Appl. \textbf{8} (1998), no. 3, 251-256.

\bibitem{Da}
L. Da Rios, {\em On the motion of an unbounded fluid with a vortex filament of any shape}, Rend. Circ. Mat. Palermo, {\bf 22}(1906), 117-135.

\bibitem{Ding}
W. Ding, {\em On the Schr\"odinger flows}, Proceedings of the International Congress of Mathematicians, Vol. II (Beijing, 2002), 283-291, Higher Ed. Press, Beijing, 2002.

\bibitem{DW1}
W. Ding and Y. Wang, {\em Schr\"odinger flow of maps into symplectic manifolds}, Sci. China Ser. A, {\bf 41} (1998), no. 7, 746-755.

\bibitem{DW2}
W. Ding and Y. Wang, {\em Local Schr\"odinger flow into K\"ahler manifolds}, Sci. China Ser. A, {\bf 44} (2001), no. 11, 1446-1464.

\bibitem{Friedman}
A. Friedman, {\em Partial differential equations}. Holt, Rinehart and Winston, Inc., New York-Montreal, Que.-London, 1969. vi+262 pp.

\bibitem{Go}
H. Gomez, {\em Binormal motion of curves and surfaces in a manifold}, Ph.D. thesis, University of Maryland, 2004.

\bibitem{HV}
S. Haller and C. Vizman, {\em Non-linear Grassmannians as coadjoint orbits}, arXiv: math/0305089 [math.DG]; the abridged version is published: Math. Ann.
{\bf 329} (2004), no. 4, 771-785.

\bibitem {Ha}
R. S. Hamilton, {\em Three-manifolds with positive Ricci curvature}, J. Differential Geom., {\bf 17} (1982), no. 2, 255-306.

\bibitem{HanSun}
X. Han and J. Sun, {\em $\varepsilon_{0}$-regularity for mean curvature flow from surface to flat Riemannian manifold}, Acta Math. Sin. (Engl. Ser.), {\bf 28} (2012), no. 7, 1475-1490.

\bibitem{Hasimoto}
 H. Hasimoto, {\em A soliton on a vortex filament}, J. Fluid Mech. {\bf 51} (1972), 477-485.

\bibitem{Hu1}
G. Huisken,  {\em Flow by mean curvature of convex surfaces into spheres}, J. Diff. Geom., {\bf 20} (1984), 237-266.

\bibitem{Jerrard}
R. Jerrard, {\em Vortex filament dynamics for Gross-Pitaevsky type equations}. Ann. Sc. Norm. Super. Pisa CI. Sci. (5) 1 (2002). no. 4, 733-768.

\bibitem{JS15}
R. Jerrard and D. Smets, {\em On the motion of a curve by its binormal curvature}. J. Eur. Math. Soc. \textbf{17} (2015), no. 6, 1487-1515.

\bibitem{JS99}
R. Jerrard and H. Soner, {\em Scaling limits and regularity results for a class of Ginzburg-Landau systems},
Ann. Inst. H. Poincar¨¦ Anal. Non Lin¨¦aire \textbf{16} (1999), no. 4, 423-466.

\bibitem{JL}
H. Jian and Y. Liu, {\em Ginzburg-Landau vortex and mean curvature flow with external force field},
Acta Math. Sin. (Engl. Ser.) \textbf{22} (2006), no. 6, 1831-1842.

\bibitem{Kh}
B. Khesin, {\em Symplectic structures and dynamics on vortex membranes}, Mosc. Math. J. {\bf 12} (2012), no. 2, 413-434, 461-462.

\bibitem{KL}
E. Kuwert and Y. Li, {\em $W^{2,2}$-conformal immersions of a closed Riemann surface into $R^n$}, Comm. Anal. Geom. 20 (2012), no. 2, 313-340.

\bibitem{KS1}
E. Kuwert and R. Sch\"atzle,  {\em Gradient flow for the Willmore Functional}, Comm. Anal. Geom., {\bf 10} (2002), no. 2, 307-339.

\bibitem{KS2}
E. Kuwert and R. Sch\"atzle,  {\em The Willmore Flow with Small Initial Energy}, J. Differential Geometry, {\bf 57} (2001), 409-441.

\bibitem{Langer}
J. Langer, {\em A compact theorem for surfaces with $L_p$-bounded second fundamental form}, Math. Ann., {\bf 270} (1985), 223-234.

\bibitem{Lin98}
F. Lin, {\em Complex Ginzburg-Landau equations and dynamics of vortices, filaments, and
codimension-2 submanifolds}, Comm. Pure Appl. Math. {\bf 51} (1998), 385-441.

\bibitem{Lin09}
F. Lin, {\em Topological vorticity and geometric consered motion}, lecture presented at Workshop on Geometric Partial Differential Equations, Institute for Advanced Study, Princeton, February 23-27, 2009.

\bibitem{LW10}
F. Lin and J. Wei, {\em Traveling wave solutions of the Schr\"odinger map equation}. Comm. Pure Appl. Math. \textbf{63} (2010), no. 12, 1585-1621.

\bibitem{LinT}
T. Lin, {\em Rigorous and generalized derivation of vortex line dynamics in superfluids and superconductors}, SIAM J. Appl. Math. \textbf{60} (2000), no. 3, 1099-1110.

\bibitem{MW}
J. Marsden and A. Weinstein, {\em Coadjoint orbits, vortices, and Clebsch variables for incompressible fluids}, Physica D, {\bf 7:1-3}(1983), 305-323.

\bibitem{Mc}
H. McGahagan, {\em An approximation scheme for Schr\"odinger maps}, Comm. Partial Differential Equations \textbf{32} (2007), no. 1-3, 375-400.

\bibitem{NT}
T. Nishiyama and A. Tani, {\em Solvability of the localized induction equation for vortex motion}, Comm. Math. Phys. {\bf 162} (1994), no. 3, 433-445.

\bibitem{Sh}
B. N. Shashikanth, {\em Vortex dynamics in $\Real^4$}, J. Math. Phys. {\bf 53} (2012), 013103.

\bibitem{Simon1}
L. M. Simon, {\em Existence of surfaces minimizing the Willmore functional}, Comm. Anal. Geom., {\bf 1} (1993), 281-326.

\bibitem{S}
C. Song, {\em Uniqueness of skew mean curvature flow}, Proc. Amer. Math. Soc. 145(2017), no. 11, 4963-4970.

\bibitem{SY}
C. Song and J. Yu, {\em The Cauchy problem of generalized Landau-Lifshitz equation into $S^n$}. Sci. China Math., {\bf 56} (2013), no. 2, 283-300.

\bibitem{Taylor}
M. Taylor, {\em Partial differential equations III. Nonlinear equations}. Second edition. Applied Mathematical Sciences, 117. Springer, New York, 2011. xxii+715 pp.

\bibitem{Terng}
C. Terng, {\em Dispersive geometric curve flows}, Surveys in differential geometry 2014. Regularity and evolution of nonlinear equations, 179-229,
Surv. Differ. Geom., 19, Int. Press, Somerville, MA, 2015. 

\bibitem{TU}
C. Terng, and K. Uhlenbeck, {\em Schr\"dinger flows on Grassmannians}. Integrable systems, geometry, and topology, 235-256, AMS/IP Stud. Adv. Math., 36, Amer. Math. Soc., Providence, RI, 2006.

\bibitem{Wang}
M. Wang, {\em Mean curvature flow of surfaces in Einstein four manifolds}, J. Diff. Geom., {\bf 57} (2001), 301-338.

\end{thebibliography}
\end{document}